\documentclass[11pt,a4paper]{article}

\usepackage{tikz}
\usepackage{amsthm}
\usepackage{amsmath}
\usepackage{amssymb}
\usepackage{mathrsfs}
\usepackage{geometry}
\usepackage{graphicx}
\usepackage{amsfonts}
\usepackage{epstopdf}
\usepackage{enumerate}
\usepackage{hyperref}
\usepackage{subcaption}

\usetikzlibrary{decorations.pathreplacing}

\geometry{left=1in,right=1in,top=1in,bottom=1in}

\allowdisplaybreaks

\newcommand{\vphi}{\varphi}

\newcommand{\md}{\mathrm{d}}
\newcommand{\loc}{{\mathrm{loc}}}

\newcommand{\R}{\mathbb{R}}

\newcommand{\rmnum}[1]{\romannumeral #1} 
\newcommand{\Rmnum}[1]{\uppercase\expandafter{\romannumeral#1}} 

\newcommand{\bbE}{\mathbb{E}}

\newcommand{\calC}{\mathcal{C}}
\newcommand{\calD}{\mathcal{D}}
\newcommand{\calE}{\mathcal{E}}
\newcommand{\calF}{\mathcal{F}}

\newcommand{\myset}[1]{\left\{#1\right\}}
\newcommand{\mybar}[1]{\overline{#1}}
\newcommand{\mynorm}[1]{\lVert#1\rVert}

\newtheorem{mythm}{Theorem}[section]
\newtheorem{myprop}[mythm]{Proposition}
\newtheorem{mylem}[mythm]{Lemma}
\newtheorem{mycor}[mythm]{Corollary}
\newtheorem{myrmk}[mythm]{Remark}
\newtheorem{myrmks}[mythm]{Remarks}

\newtheorem{myques}{Question}

\AtEndDocument{
\bigskip{\footnotesize%
  \textsc{Universit\'e Grenoble Alpes, CNRS UMR 5582, Institut Fourier, Gi\`eres, France.} \par
  \textit{E-mail address}: \texttt{yangmengqh@gmail.com}
}}

\setcounter{equation}{0}

\begin{document}

\title{On the Domains of Dirichlet Forms on Metric Measure Spaces}
\author{Meng Yang}
\date{}

\maketitle

\abstract{We prove that the domain of the local Dirichlet form is \emph{strictly} contained in the domain of any stable-like non-local Dirichlet form on general metric measure spaces.}

\footnote{\textsl{Date}: \today}
\footnote{\textsl{MSC2010}: Primary 60J35; Secondary 28A80, 31C25}
\footnote{\textsl{Keywords}: Sobolev space, Dirichlet form, Sierpi\'nski gasket, Sierpi\'nski carpet}
\footnote{The author was very grateful to Dr. Eryan Hu for helpful discussions. The author was very grateful to anonymous suggestions to generalize to general metric measure spaces.}

\section{Introduction}

Let us recall the following classical Sobolev spaces.
\begin{align*}
H^1(\R^d)&=\myset{u\in L^2(\R^d):\int_{\R^d}|\nabla u(x)|^2\md x<+\infty},\\
H^\delta(\R^d)&=\myset{u\in L^2(\R^d):\int_{\R^d}\int_{\R^d}\frac{(u(x)-u(y))^2}{|x-y|^{d+2\delta}}\md x\md y<+\infty}\text{ for }\delta\in(0,1).
\end{align*}
We have their characterizations using Fourier transform as follows.
$$H^{\delta}(\R^d)=\myset{u\in L^2(\R^d):\int_{\R^d}|\calF[u](x)|^2|x|^{2\delta}\md x<+\infty}\text{ for }\delta\in(0,1],$$
where $\calF:L^2(\R^d)\to L^2(\R^d)$, $u\mapsto\calF[u]$ is the Fourier transform which is an isometry. It is easy to check that $H^\delta(\R^d)$ is monotone decreasing in $\delta\in(0,1]$, that is, $H^{\delta_1}(\R^d)\supseteq H^{\delta_2}(\R^d)$ for any $\delta_1,\delta_1\in(0,1]$ with $\delta_1<\delta_2$, hence $H^{\delta_0}(\R^d)\subseteq\cap_{\delta\in(0,\delta_0)}H^{\delta}(\R^d)$ for any $\delta_0\in(0,1]$. A natural question is whether this is equal. However, the answer is NO. Denote $\calF^{-1}:L^2(\R^d)\to L^2(\R^d)$ as the inverse Fourier transform. Let
$$u=\calF^{-1}\left[\left(x\mapsto\frac{1}{|x|^{\frac{d}{2}+\delta_0}}1_{|x|>1}\right)\right].$$
Then $u\in\cap_{\delta\in(0,\delta_0)}H^{\delta}(\R^d)\backslash H^{\delta_0}(\R^d)$ by the above characterizations.

The main purpose of this paper is to consider similar questions on general metric measure spaces where function spaces serve as the domains of Dirichlet forms.

Let $(K,d,\nu)$ be an $\alpha$-regular metric measure space. Consider the following non-local quadratic form
\begin{align*}
&\calE_\beta(u,u)=\int_K\int_K\frac{(u(x)-u(y))^2}{d(x,y)^{\alpha+\beta}}\nu(\md x)\nu(\md y),\\
&\calF_\beta=\myset{u\in L^2(K;\nu):\int_K\int_K\frac{(u(x)-u(y))^2}{d(x,y)^{\alpha+\beta}}\nu(\md x)\nu(\md y)<+\infty},
\end{align*}
where $\beta\in(0,+\infty)$ is so far arbitrary. Note that $\calF_\beta$ is monotone decreasing in $\beta$ and $\calF_\beta$ may be trivial for very large $\beta$. The critical exponent
$$\beta^*:=\sup\myset{\beta\in(0,+\infty):(\calE_\beta,\calF_\beta)\text{ is a regular Dirichlet form on }L^2(K;\nu)}$$
is called the walk dimension of the metric measure space $(K,d,\nu)$. It holds true on many metric measure spaces that $\beta^*\in(0,+\infty)$, $\calF_{\beta^*}$ is trivial but there lives a local regular Dirichlet form $(\calE_\loc,\calF_\loc)$ on $L^2(K;\nu)$ related to the critical exponent $\beta^*$. For example, the Euclidean spaces and a large family of fractal spaces including the Sierpi\'nski gasket and the Sierpi\'nski carpet. It is natural to raise the following questions.

\begin{myques}\label{ques_1}
Is it true that $\calF_\loc\subseteq\cap_{\beta\in(0,\beta^*)}\calF_\beta$? Is it true that $\calF_\loc=\cap_{\beta\in(0,\beta^*)}\calF_\beta$? 
\end{myques}

\begin{myques}\label{ques_2}
Given $\beta_0\in(0,\beta^*)$, is it true that $\calF_{\beta_0}=\cap_{\beta\in(0,\beta_0)}\calF_\beta$?
\end{myques}

On $\R^d$, $\beta^*=2$, $\calF_\loc=H^1(\R^d)$ and $\calF_{\beta}=H^{\beta/2}(\R^d)$ for $\beta\in(0,2)$. The answers to the above two questions were already given.

First, we consider two typical fractal spaces, that is, the Sierpi\'nski gasket and the Sierpi\'nski carpet. On the Sierpi\'nski gasket, $\beta^*=\log5/\log2$ and the existence of a local regular Dirichlet form was given by \cite{BP88,Kig89}. On the Sierpi\'nski carpet, $\beta^*\approx2.0969$ whose exact value is still unknown and the existence of a local regular Dirichlet form was given by \cite{BB89,KZ92}. We give the answer to Question \ref{ques_1} as follows.

\begin{myprop}\label{prop_SGSC}
On the Sierpi\'nski gasket (SG) and the Sierpi\'nski carpet (SC), we have
$$\calF_\loc\subsetneqq\bigcap_{\beta\in(0,\beta^*)}\calF_\beta.$$
\end{myprop}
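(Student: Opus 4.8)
The plan is to prove (a) the inclusion $\calF_\loc\subseteq\bigcap_{\beta\in(0,\beta^*)}\calF_\beta$ together with continuity of each embedding $\calF_\loc\hookrightarrow\calF_\beta$, (b) the existence of $v_k\in\calF_\loc$ with $\mynorm{v_k}_{\calF_\loc}/\mynorm{v_k}_{\calF_\beta}\to\infty$ as $k\to\infty$ for every fixed $\beta\in(0,\beta^*)$, and then to deduce strictness from (a) and (b) by the open mapping theorem. Rescaling the metric I may assume $\operatorname{diam}(K)\le1$; put $\rho=2$ on SG and $\rho=3$ on SC, so level-$n$ cells have diameter $\asymp\rho^{-n}$ and $\nu$-measure $\asymp\rho^{-n\alpha}$, and for $u\in L^2(K;\nu)$ set $S_n(u)=\int_K\int_{B(x,\rho^{-n})}(u(x)-u(y))^2\,\nu(\md y)\,\nu(\md x)$ and $\mynorm{u}_{\calF_\beta}^2=\calE_\beta(u,u)+\mynorm{u}_{L^2}^2$, $\mynorm{u}_{\calF_\loc}^2=\calE_\loc(u,u)+\mynorm{u}_{L^2}^2$. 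A dyadic decomposition of $\calE_\beta(u,u)$ over the shells $\rho^{-n-1}\le d(x,y)<\rho^{-n}$ gives the elementary bound $\calE_\beta(u,u)\le C\sum_{n\ge0}\rho^{n(\alpha+\beta)}S_n(u)$, and $\alpha$-regularity gives $S_n(u)\le C\rho^{-n\alpha}\mynorm{u}_{L^2}^2$. The single genuinely nonelementary ingredient is the known critical bound for the local form on SG and SC, $S_n(u)\le C\rho^{-n(\alpha+\beta^*)}\mynorm{u}_{\calF_\loc}^2$ for $u\in\calF_\loc$ (equivalently, the continuous embedding of $\calF_\loc$ into the critical Besov space $B^{2,\infty}_{\beta^*/2}$, a standard consequence of the sub-Gaussian heat kernel upper bound). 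Combining the first and third estimates, for $u\in\calF_\loc$ and $\beta\in(0,\beta^*)$ one gets $\calE_\beta(u,u)\le C\mynorm{u}_{\calF_\loc}^2\sum_{n\ge0}\rho^{-n(\beta^*-\beta)}<\infty$, so $u\in\calF_\beta$ and $\mynorm{u}_{\calF_\beta}\le C_\beta\mynorm{u}_{\calF_\loc}$ with $C_\beta\to\infty$ as $\beta\uparrow\beta^*$. This is (a).

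For (b), for each $k\ge1$ I would take $v_k\in\calF_\loc$ with $\mynorm{v_k}_{L^\infty}\le1$, $\mynorm{v_k}_{L^2}^2\asymp1$ and $\calE_\loc(v_k,v_k)\asymp\rho^{k\beta^*}$. On SG, let $v_k$ be the harmonic spline equal to $1$ on the level-$k$ vertices that are not level-$(k-1)$ vertices and equal to $0$ on the level-$(k-1)$ vertices; it is harmonic on every level-$k$ cell, so $\calE_\loc(v_k,v_k)\asymp(5/3)^k E_k(v_k)$ with $E_k$ the level-$k$ graph energy, and since each level-$k$ cell carries the values $(0,1,1)$ on its three vertices it contributes exactly $2$, giving $E_k(v_k)=2\cdot3^k$ and hence $\calE_\loc(v_k,v_k)\asymp5^k=\rho^{k\beta^*}$; the other two properties follow from self-similarity of $\nu$. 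On SC one obtains $v_k$ analogously by pasting rescaled copies of a fixed non-constant $f\in\calF_\loc\cap C(K)$ that vanishes on $K\cap\partial([0,1]^2)$ (so the copies match continuously) onto the cells of level $k-1$, the self-similarity $\calE_\loc(u,u)=\rho_{SC}\sum_{i=1}^{8}\calE_\loc(u\circ F_i,u\circ F_i)$ together with $8\rho_{SC}=3^{\beta^*}$ giving $\calE_\loc(v_k,v_k)\asymp3^{k\beta^*}=\rho^{k\beta^*}$. Now feeding $S_n(v_k)\le C\rho^{-n\alpha}$ and $S_n(v_k)\le C\rho^{-n(\alpha+\beta^*)}\mynorm{v_k}_{\calF_\loc}^2\asymp C\rho^{-n(\alpha+\beta^*)}\rho^{k\beta^*}$ into the dyadic estimate and splitting the sum at $n=k$ gives, for every $\beta\in(0,\beta^*)$, $\calE_\beta(v_k,v_k)\le C\sum_{0\le n\le k}\rho^{n\beta}+C\rho^{k\beta^*}\sum_{n>k}\rho^{n(\beta-\beta^*)}\le C_\beta\rho^{k\beta}$. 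Hence $\mynorm{v_k}_{\calF_\loc}^2\asymp\rho^{k\beta^*}$ while $\mynorm{v_k}_{\calF_\beta}^2\le C_\beta\rho^{k\beta}$, so $\mynorm{v_k}_{\calF_\loc}/\mynorm{v_k}_{\calF_\beta}\to\infty$ with $k$, for each fixed $\beta<\beta^*$. This is (b).

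To conclude, suppose for contradiction that $\calF_\loc=X:=\bigcap_{\beta\in(0,\beta^*)}\calF_\beta$. Since $\calF_\beta$ decreases in $\beta$, $X=\bigcap_{m\ge1}\calF_{\beta_m}$ for any sequence $\beta_m\uparrow\beta^*$; endowing $X$ with the countable family of norms $\mynorm{\cdot}_{\calF_{\beta_m}}$ makes it a Fréchet space (completeness is immediate since $\mynorm{\cdot}_{\calF_{\beta_m}}\ge\mynorm{\cdot}_{L^2}$ for all $m$, and each $(\calF_{\beta_m},\mynorm{\cdot}_{\calF_{\beta_m}})$ is complete). By (a) the identity map $\calF_\loc\to X$ is a continuous linear bijection between Fréchet spaces, hence a homeomorphism by the open mapping theorem; continuity of its inverse yields $m_0\in\mathbb{N}$ and $C<\infty$ with $\mynorm{u}_{\calF_\loc}\le C\max_{1\le m\le m_0}\mynorm{u}_{\calF_{\beta_m}}$ for all $u$. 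Taking $u=v_k$, the right-hand side is $\le C'\rho^{k\beta_{m_0}/2}$ (the maximum being attained at $m=m_0$, as $\beta_m$ increases), which as $k\to\infty$ contradicts $\mynorm{v_k}_{\calF_\loc}\asymp\rho^{k\beta^*/2}$. Therefore $\calF_\loc\subsetneqq\bigcap_{\beta\in(0,\beta^*)}\calF_\beta$.

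I expect the crux to be step (b): one must check that the $v_k$ realize the fastest scale-decay that $\calF_\loc$-membership permits — this is exactly where the critical Besov estimate of the first paragraph (rather than the much weaker resistance bound, which only handles $\beta<2\alpha-\beta^*$) is indispensable — while simultaneously $\calE_\beta(v_k,v_k)$ stays of the precise order $\rho^{k\beta}$ uniformly over all subcritical $\beta$; the resulting gap between the exponents $\beta^*$ and $\beta$ is then converted into strictness by the soft open mapping argument. (As an alternative that avoids functional analysis: with the identification of $\calE_\beta$ with the quadratic form of $(-\Delta_\loc)^{\beta/\beta^*}$, the lacunary series $\sum_m\lambda_{j_m}^{-1/2}\psi_{j_m}$ built from $-\Delta_\loc$-eigenfunctions with eigenvalues $\lambda_{j_m}\uparrow\infty$ growing fast enough belongs to $\bigl(\bigcap_{\beta\in(0,\beta^*)}\calF_\beta\bigr)\setminus\calF_\loc$.)
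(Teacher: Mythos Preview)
Your proof is correct, and it takes a genuinely different route from the paper's. The paper proves Proposition~\ref{prop_SGSC} by \emph{explicit construction}: on the SG it builds, level by level, a continuous $u$ with $a_n(u)=n$ for all $n$ (using the discrete characterizations $\calF_\loc=\{u:\sup_na_n(u)<\infty\}$ and $\calF_\beta=\{u:\sum_n\rho^{(\beta-\beta^*)n}a_n(u)<\infty\}$), and on the SC it carries out a considerably more delicate inductive gluing of two model functions $u_1,u_2$ (Lemmas~\ref{lem_u1}, \ref{lem_u2}) to produce $u$ with $a_n(u)\le Cn$ and $a_{n_k}(u)\ge n_k/C$ along a subsequence. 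Your argument, by contrast, is \emph{indirect}: you exhibit a sequence $(v_k)\subseteq\calF_\loc$ along which every embedding $\calF_\loc\hookrightarrow\calF_\beta$ fails to be coercive, and then the open mapping theorem on the Fr\'echet space $\bigcap_m\calF_{\beta_m}$ converts this into strictness. What the paper's approach buys is an explicit element of the difference set and a completely elementary argument (no Baire category, no heat kernel input on the SG side); what your approach buys is a uniform treatment of SG and SC that bypasses the intricate SC construction entirely, at the cost of invoking the critical Besov embedding $S_n(u)\lesssim\rho^{-n(\alpha+\beta^*)}\lVert u\rVert_{\calF_\loc}^2$ (equivalently the monotonicity $a_n(u)\lesssim\calE_\loc(u,u)$, which on the SC already encodes the weak monotonicity result). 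Your parenthetical spectral alternative is essentially the mechanism behind the paper's Theorem~\ref{thm_subordinate} and Proposition~\ref{prop_hk}, which the paper itself notes as an abstract route covering Proposition~\ref{prop_SGSC}; your main open-mapping argument is a third, distinct proof.
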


\begin{myrmks}
\begin{enumerate}[(1)]
\item ``$\subseteq$" was used implicitly by Grigor'yan and the author \cite{GY18,GY19,Yan17} to show the denseness of $\calF_\beta$ in certain function spaces. It is relatively easy to prove ``$\subseteq$".
\item The key novelty of the above result is ``$\subsetneqq$". We will prove by \emph{constructing} explicit functions in $\cap_{\beta\in(0,\beta^*)}\calF_\beta\backslash\calF_\loc$. The construction on the SG is much easier than the construction on the SC due to the facts that the SG is a typical finitely ramified fractal space while the SC is a typical infinitely ramified fractal space. Due to the intrinsically analytic difference between the SG and the SC, the construction on the SG can \emph{not} be applied on the SC at all.
\item The above result can be in fact covered by the following Proposition \ref{prop_hk}. However, we will give the construction of some explicit functions in $\cap_{\beta\in(0,\beta^*)}\calF_\beta\backslash\calF_\loc$ in the proof instead of giving only the existence of such functions in the proof of Proposition \ref{prop_hk}.
\end{enumerate}
\end{myrmks}

Second, we give conditions to give answers to Question \ref{ques_1} and Question \ref{ques_2} as follows.

\begin{myprop}\label{prop_hk}
Let $(K,d,\nu)$ be an $\alpha$-regular metric measure space satisfying the chain condition and that all metric balls are relatively compact. Let $(\calE,\calF)$ be a conservative regular Dirichlet form on $L^2(K;\nu)$ with a heat kernel $p_t(x,y)$ satisfying
$$\frac{C_1}{t^{\alpha/\beta_0}}\Phi\left(C_2\frac{d(x,y)}{t^{1/\beta_0}}\right)\le p_t(x,y)\le\frac{C_3}{t^{\alpha/\beta_0}}\Phi\left(C_4\frac{d(x,y)}{t^{1/\beta_0}}\right)$$
for any $x,y\in K$, for any $t\in(0,\mathrm{diam}(K)^{\beta_0})$, here $\mathrm{diam}(K):=\sup\myset{d(x,y):x,y\in K}$ is infinite if $K$ is unbounded and is finite if $K$ is bounded, where $\beta_0\in(0,+\infty)$ is some parameter, $C_1,C_2,C_3,C_4$ are some positive constants and $\Phi:(0,+\infty)\to(0,+\infty)$ is some monotone decreasing function. Then
$$\calF\subsetneqq\bigcap_{\beta\in(0,\beta_0)}\calF_\beta.$$
\end{myprop}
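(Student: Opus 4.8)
The plan is to split the statement into the two assertions "$\calF\subseteq\bigcap_{\beta\in(0,\beta_0)}\calF_\beta$" and "the inclusion is strict", and to treat them by quite different means. For the inclusion, I would use the heat-kernel upper bound to compare the non-local forms $\calE_\beta$ with the Dirichlet form $\calE$. The standard identity $\calE(u,u)=\lim_{t\downarrow0}\frac1t\int_K\int_K(u(x)-u(y))^2p_t(x,y)\,\nu(\md x)\nu(\md y)$ (or the equivalent statement that $\frac1t\int_K\int_K(u(x)-u(y))^2p_t(x,y)\nu(\md x)\nu(\md y)$ is bounded in $t$ iff $u\in\calF$, with the supremum comparable to $\calE(u,u)+\mynorm{u}_2^2$) is the right tool. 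Given $u\in\calF$ and $\beta<\beta_0$, I would write

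\begin{align*}
\calE_\beta(u,u)&=\int_K\int_K\frac{(u(x)-u(y))^2}{d(x,y)^{\alpha+\beta}}\nu(\md x)\nu(\md y)\\
&=c\int_0^\infty t^{(\beta-\beta_0)/\beta_0}\frac{\md t}{t}\int_K\int_K(u(x)-u(y))^2\frac{1}{t^{\alpha/\beta_0}}\mathbf 1_{\{d(x,y)^{\beta_0}\le t\}}\,\nu(\md x)\nu(\md y),
\end{align*}

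using $d(x,y)^{-(\alpha+\beta)}=c\int_0^\infty t^{-\alpha/\beta_0-\beta/\beta_0}\mathbf 1_{\{d(x,y)^{\beta_0}\le t\}}\frac{\md t}{t}$. Since $\Phi$ is decreasing and positive, $\Phi(C_4 d(x,y)/t^{1/\beta_0})\ge\Phi(C_4)>0$ on $\{d(x,y)^{\beta_0}\le t\}$, so the inner double integral is bounded by $\Phi(C_4)^{-1}C_3^{-1}\cdot\frac1t\int_K\int_K(u(x)-u(y))^2 p_t(x,y)\nu(\md x)\nu(\md y)\cdot t$, which — after splitting the $t$-integral at $t=1$ and at $t=\mathrm{diam}(K)^{\beta_0}$ and using the boundedness of the normalized energy integral for small $t$ together with the crude bound $(u(x)-u(y))^2\le 2u(x)^2+2u(y)^2$ and $\alpha$-regularity for large $t$ (and, when $K$ is bounded, noting $\calF_\beta$ only sees $t\lesssim\mathrm{diam}(K)^{\beta_0}$) — gives $\calE_\beta(u,u)\le C(\calE(u,u)+\mynorm{u}_2^2)<\infty$ because $(\beta-\beta_0)/\beta_0<0$ makes the small-$t$ part converge. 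One must be a little careful that the chain condition / relative compactness of balls is what guarantees the two-sided heat kernel estimate is genuinely available and that $K$ carries such a form, but those are hypotheses here.

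For strictness, I would exhibit an explicit $u\in\bigcap_{\beta<\beta_0}\calF_\beta\setminus\calF$, mimicking the Euclidean counterexample from the introduction but carried out through the chain condition rather than the Fourier transform. The chain condition provides, for any two points and any $n$, a chain of $\sim n$ points with consecutive distances $\sim d(x,y)/n$; this is exactly what makes $\alpha$-regular chain-condition spaces behave like $\R^d$ for the purpose of building "slowly oscillating" functions. Concretely, I would fix a point $o\in K$ and a reference ball, and build $u$ as a lacunary-type series $u=\sum_{k} a_k \psi_k$ where $\psi_k$ are bump functions supported on annuli (or on a nested sequence of balls) $A_k$ at scale $r_k=\theta^k$, normalized so that $\mynorm{\psi_k}_\infty\asymp1$, with amplitudes $a_k$ chosen on the borderline: $a_k$ decaying just fast enough that $\calE_\beta(u,u)<\infty$ for every $\beta<\beta_0$ — the contribution of scale $k$ to $\calE_\beta$ is of order $a_k^2\,r_k^{-\beta}\,$ (times a factor from $\alpha$-regularity that cancels), giving a geometric series with ratio $\theta^{-\beta}$ that converges precisely when the borderline decay of $a_k$ beats $\theta^{k\beta}$ for $\beta<\beta_0$ but not for $\beta=\beta_0$ — while simultaneously arranging, via the heat-kernel lower bound, that the normalized energy integral $\frac1t\iint(u(x)-u(y))^2p_t(x,y)$ diverges as $t\downarrow0$, so $u\notin\calF$. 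The lower bound $p_t(x,y)\ge C_1 t^{-\alpha/\beta_0}\Phi(C_2 d(x,y)/t^{1/\beta_0})$ with $t\asymp r_k^{\beta_0}$ isolates scale $k$ and shows this integral is $\gtrsim\sum_{k} a_k^2 r_k^{-\beta_0}$, which diverges by the choice of $a_k$ on the critical line. I expect the construction of these bump functions to require only that each $\psi_k$ be Lipschitz with the right support and sup-norm, which is automatic on a metric space via $\psi_k(x)=(1-\mathrm{dist}(x,\cdot)/r_k)_+$-type truncations — but one has to check they genuinely lie in $\calF$ individually and estimate the cross terms in the double series, which is where the lacunarity $\theta$ large is used.

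The main obstacle, I expect, is the strictness half — specifically, controlling the cross terms in $\calE_\beta(u,u)$ and in the normalized energy integral when $u$ is an infinite superposition of bumps at different scales, and making the "borderline" choice of amplitudes $a_k$ sharp enough that convergence holds for all $\beta<\beta_0$ yet fails at $\beta=\beta_0$ in the energy. On a genuine fractal (as in Proposition \ref{prop_SGSC}) one cannot use a literal Fourier transform, so this superposition-of-scales argument, powered on one side by the heat-kernel lower bound and on the other by the Besov-type characterization $\calF_\beta=B^{2,2}_{\alpha,\beta}$ implicit in the definition, is the substitute; getting the two sharp estimates to meet exactly at $\beta_0$ is the delicate point, and I would handle it by choosing $a_k^2 \asymp \theta^{k\beta_0}/k^{2}$ (or a comparable mild correction) so that $\sum_k a_k^2\theta^{-k\beta}<\infty$ for $\beta<\beta_0$ while $\sum_k a_k^2\theta^{-k\beta_0}=\sum_k k^{-2}\cdot(\text{no, diverges?})$ — the exact exponent on the logarithmic correction is what the routine computation must pin down, and I would defer that to the detailed proof. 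As a shortcut for the inclusion part, if a clean statement "$\calF=$ Besov space $B^{2,2}_{\alpha,\beta_0/2\text{-type}}$ under sub-Gaussian heat kernel estimates" is citable, one can invoke it directly and skip the layer-cake computation above; I would mention both routes.
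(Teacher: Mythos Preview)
Your inclusion argument has the right shape, though you should use the heat-kernel \emph{lower} bound (constants $C_1,C_2$) to dominate $t^{-\alpha/\beta_0}\mathbf 1_{\{d(x,y)\le t^{1/\beta_0}\}}$ by a multiple of $p_t(x,y)$, not the upper bound; and your exponent $t^{(\beta-\beta_0)/\beta_0}$ is off --- the correct layer-cake gives $\calE_\beta(u,u)\lesssim\int_0^\infty t^{-\beta/\beta_0}\calE_{(t)}(u,u)\,\md t$, whose small-$t$ part converges precisely because $\beta/\beta_0<1$. These are minor slips.

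The strictness half, however, has a genuine gap. Your proposed bumps $\psi_k(x)=(1-\mathrm{dist}(x,\cdot)/r_k)_+$ are Lipschitz, and on an $\alpha$-regular space a Lipschitz function $\psi$ satisfies $|\psi(x)-\psi(y)|\asymp d(x,y)$ on a set of positive measure (the chain condition in fact forces this), so the near-diagonal contribution to $\calE_\beta(\psi,\psi)$ is of order $\int_0^{r_k} s^{2-\beta}\,\frac{\md s}{s}$, which diverges as soon as $\beta\ge 2$. Thus for any $\beta_0>2$ --- and this is exactly the fractal regime of interest (on the gasket $\beta^*=\log5/\log2>2$, on the carpet $\beta^*\approx 2.097$) --- your building blocks $\psi_k$ are not in $\calF_\beta$ for $\beta\in[2,\beta_0)$, and no choice of amplitudes $a_k$ can place $u=\sum a_k\psi_k$ in $\bigcap_{\beta<\beta_0}\calF_\beta$. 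The scaling heuristic ``contribution of scale $k$ to $\calE_\beta$ is $\asymp a_k^2 r_k^{-\beta}$'' is simply false for these bumps once $\beta\ge2$. To salvage the approach you would need bumps that already lie in $\calF$ with controlled energy at each scale --- essentially the content of the paper's explicit constructions on the gasket and carpet (Sections~\ref{sec_SG}--\ref{sec_SC}), which are highly space-specific.

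The paper avoids this entirely by a different, abstract route: it proves (Theorem~\ref{thm_subordinate}) that for any regular Dirichlet form with unbounded generator $A$, the domain $\calF$ is strictly contained in $\bigcap_{\delta\in(0,1)}\calF^{(\delta)}$, where $\calF^{(\delta)}$ is the domain of the $\delta$-subordinated form. The witness $u\in\bigcap_\delta\calF^{(\delta)}\setminus\calF$ is built from the spectral resolution of $A$: take $u_k$ in the range of the spectral projection $E((2^k,2^{k+1}])$ with $\|u_k\|_2^2=2^{-(k+1)}$ and set $u=\sum_k u_k$; then $\int\lambda\,\md(E_\lambda u,u)=\infty$ but $\int\lambda^\delta\,\md(E_\lambda u,u)<\infty$ for every $\delta<1$. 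This requires no explicit functions on $K$ at all. The proof of Proposition~\ref{prop_hk} is then reduced to two lemmas: that the heat-kernel lower bound forces $A$ to be unbounded (Lemma~\ref{lem_unbounded}), and that the subordinated domains coincide with the Besov-type domains, $\calF^{(\delta)}=\calF_{\delta\beta_0}$, which follows from computing the jump kernel $J^{(\delta)}(x,y)\asymp d(x,y)^{-(\alpha+\delta\beta_0)}$ by subordination. Your approach and the paper's agree in spirit on the inclusion, but for strictness the paper's spectral construction is both cleaner and, crucially, works uniformly in $\beta_0$.
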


\begin{myrmk}
Grigor'yan and Kumagai \cite[Theorem 4.1]{GK08} proved that under the above settings, the following dichotomy holds.
\begin{enumerate}[(a)]
\item Either $(\calE,\calF)$ on $L^2(K;\nu)$ is local, $\beta_0\in[2,\alpha+1]$ and
$$\Phi(s)\asymp C\exp\left(-cs^{\frac{\beta_0}{\beta_0-1}}\right).$$
\item Or $(\calE,\calF)$ on $L^2(K;\nu)$ is non-local, $\beta_0\in(0,\alpha+1]$ and
$$\Phi(s)\asymp(1+s)^{-(\alpha+\beta_0)}.$$
\end{enumerate}
If $(\calE,\calF)=(\calE_\loc,\calF_\loc)$ and $\beta_0=\beta^*$, then this is case (a) and the above result gives the answer to Question \ref{ques_1}. If $(\calE,\calF)=(\calE_{\beta_0},\calF_{\beta_0})$ and $\beta_0\in(0,\beta^*)$, then this is case (b) and the above result gives the answer to Question \ref{ques_2}.
\end{myrmk}

Proposition \ref{prop_hk} is indeed a consequence of the following result.

\begin{mythm}\label{thm_subordinate}
Let $(K,d,\nu)$ be a metric measure space. Let $(\calE,\calF)$ be a regular Dirichlet form on $L^2(K;\nu)$ that corresponds to a Hunt process $\myset{X_t}$. Let $A$ be the generator of $(\calE,\calF)$ on $L^2(K;\nu)$ which is a non-positive definite self-adjoint operator. For any $\delta\in(0,1)$, let $\myset{X^{(\delta)}_t}$ be the $\delta$-subordinated Hunt process that corresponds to a regular Dirichlet form $(\calE^{(\delta)},\calF^{(\delta)})$ on $L^2(K;\nu)$. Then
$$\calF\subseteq\bigcap_{\delta\in(0,1)}\calF^{(\delta)},$$
and ``$=$" if and only if $A$ is bounded.
\end{mythm}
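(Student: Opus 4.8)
The plan is to reduce the entire statement to the spectral calculus of the generator $A$. Let $-A=\int_{[0,\infty)}\lambda\,\md E_\lambda$ be the spectral resolution of the non-positive definite self-adjoint operator $A$, and for $u\in L^2(K;\nu)$ let $m_u$ denote the finite Borel measure on $[0,\infty)$ defined by $m_u(B)=\mynorm{E(B)u}^2$. The standard correspondence between a regular Dirichlet form and its generator gives
$$\calF=\myset{u\in L^2(K;\nu):\int_{[0,\infty)}\lambda\,\md m_u(\lambda)<+\infty}\quad\text{and}\quad\calE(u,u)=\int_{[0,\infty)}\lambda\,\md m_u(\lambda).$$
The first step is to pin down $(\calE^{(\delta)},\calF^{(\delta)})$. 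Since a $\delta$-stable subordinator has Laplace exponent $\lambda\mapsto\lambda^\delta$, Bochner's subordination formula shows that the $L^2$-semigroup of $\myset{X^{(\delta)}_t}$ equals $\mathrm{e}^{-t(-A)^\delta}=\int_{[0,\infty)}\mathrm{e}^{-t\lambda^\delta}\,\md E_\lambda$, so the generator of $(\calE^{(\delta)},\calF^{(\delta)})$ is $-(-A)^\delta$, and hence
$$\calF^{(\delta)}=\myset{u\in L^2(K;\nu):\int_{[0,\infty)}\lambda^\delta\,\md m_u(\lambda)<+\infty}\quad\text{and}\quad\calE^{(\delta)}(u,u)=\int_{[0,\infty)}\lambda^\delta\,\md m_u(\lambda).$$
The regularity of $(\calE^{(\delta)},\calF^{(\delta)})$ is part of the hypothesis, so no work is needed there.

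With these identifications, the inclusion $\calF\subseteq\bigcap_{\delta\in(0,1)}\calF^{(\delta)}$ follows at once from the elementary bound $\lambda^\delta\le1+\lambda$, valid for all $\lambda\ge0$ and $\delta\in(0,1)$: it yields $\int\lambda^\delta\,\md m_u\le\mynorm{u}^2+\calE(u,u)$ for every $u\in\calF$ and every $\delta$. The easy half of the equivalence is equally short: if $A$ is bounded then $\sigma(-A)\subseteq[0,M]$ for some finite $M$, so $(-A)^{1/2}$ and each $(-A)^{\delta/2}$ are bounded operators and $\calF=L^2(K;\nu)=\calF^{(\delta)}$ for every $\delta\in(0,1)$, whence equality.

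The substance is the converse: if $A$ is unbounded, then the inclusion is strict, and the plan is to construct an explicit witness $u\in\bigcap_{\delta\in(0,1)}\calF^{(\delta)}\setminus\calF$ from spectral subspaces living at a geometrically growing sequence of scales. Since $A$ is unbounded, $\sigma(-A)$ is unbounded, so one may pick $\mu_n\in\sigma(-A)$ with $\mu_n\ge\max\myset{\mu_{n-1}+2,\mathrm{e}^n}$; then the spectral projections $P_n:=E\big((\mu_n-1,\mu_n+1)\big)$ are nonzero with mutually orthogonal ranges. Choose a unit vector $\phi_n$ in the range of $P_n$ and set $u=\sum_n c_n\phi_n$ with $c_n^2=\tfrac{1}{n\mu_n}$. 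Then $\mynorm{u}^2=\sum_n c_n^2\le\sum_n\mathrm{e}^{-n}/n<+\infty$, so $u\in L^2(K;\nu)$; and since the $\phi_n$ lie in pairwise orthogonal spectral subspaces, $m_u=\sum_n c_n^2\,m_{\phi_n}$ with each $m_{\phi_n}$ a probability measure supported in $[\mu_n-1,\mu_n+1]$. Therefore
$$\calE(u,u)=\sum_n c_n^2\int\lambda\,\md m_{\phi_n}(\lambda)\ge\sum_n c_n^2(\mu_n-1)\ge\frac{1}{2}\sum_n c_n^2\mu_n=\frac{1}{2}\sum_n\frac{1}{n}=+\infty,$$
so $u\notin\calF$, whereas for each fixed $\delta\in(0,1)$,
$$\calE^{(\delta)}(u,u)=\sum_n c_n^2\int\lambda^\delta\,\md m_{\phi_n}(\lambda)\le\sum_n c_n^2(\mu_n+1)^\delta\le2\sum_n c_n^2\mu_n^\delta=2\sum_n\frac{\mu_n^{\delta-1}}{n}\le2\sum_n\frac{\mathrm{e}^{-n(1-\delta)}}{n}<+\infty,$$
so $u\in\bigcap_{\delta\in(0,1)}\calF^{(\delta)}$, and the inclusion is strict.

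Two points call for a little care rather than being purely mechanical. The first is the identification in the first step that subordinating the Hunt process corresponds on the $L^2$ side exactly to replacing $A$ by $-(-A)^\delta$ through the spectral calculus (Bochner--Phillips); this is classical but should be invoked cleanly, since once it is available the ``$\subseteq$'' part and the ``$A$ bounded'' part are one-line estimates. The second, and the genuine content, is the construction in the unbounded case: the geometric lower bound $\mu_n\ge\mathrm{e}^n$ is precisely what forces $\sum_n\mu_n^{\delta-1}/n$ to converge for every $\delta<1$ while keeping $\sum_n c_n^2\mu_n$ divergent, and the separation $\mu_n-\mu_{n-1}>2$ is what makes the measures $m_{\phi_n}$ mutually singular so that $m_u$ decomposes as a series with no cross terms.
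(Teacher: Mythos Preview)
Your proof is correct and follows essentially the same approach as the paper: both reduce to the spectral calculus, obtain $\calF^{(\delta)}=\{u:\int\lambda^\delta\,\md m_u<\infty\}$, and then, when $A$ is unbounded, construct the witness by placing orthogonal pieces in spectral subspaces at geometrically growing scales with weights tuned so that $\int\lambda\,\md m_u=\infty$ while $\int\lambda^\delta\,\md m_u<\infty$ for every $\delta<1$. The only cosmetic difference is that the paper uses the fixed dyadic decomposition $(2^k,2^{k+1}]$ with $\mynorm{u_k}^2=2^{-(k+1)}$, whereas you pick points $\mu_n\ge e^n$ in $\sigma(-A)$ and weights $c_n^2=1/(n\mu_n)$; both parametrizations realize the same idea.
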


\begin{myrmk}
``$\subseteq$" is also relatively easy to prove. We will prove ``$\subsetneqq$" when $A$ is unbounded by giving the \emph{existence} of some function in $\cap_{\delta\in(0,1)}\calF^{(\delta)}\backslash\calF$.
\end{myrmk}

This paper is organized as follows. In Section \ref{sec_SG}, we prove Proposition \ref{prop_SGSC} for the SG. In Section \ref{sec_SC}, we prove Proposition \ref{prop_SGSC} for the SC. In Section \ref{sec_subordinate}, we prove Theorem \ref{thm_subordinate}. In Section \ref{sec_hk}, we prove Proposition \ref{prop_hk}.

In this paper, we always assume that $(K,d,\nu)$ is a metric measure space, that is, $(K,d)$ is a locally compact separable metric space and $\nu$ is a Radon measure on $K$ with full support. We use $(\cdot,\cdot)$ to denote the inner product in $L^2(K;\nu)$. If $(\calE,\calF)$ is a closed form on $L^2(K;\nu)$, then we always define $\calE(u,u)=+\infty$ for any $u\in L^2(K;\nu)\backslash\calF$, hence $\calE$ is defined on the whole $L^2(K;\nu)$ rather than a dense subspace $\calF$.

NOTATION. The letters $c,C$ will always refer to some positive constants and may change at each occurrence. The sign $\asymp$ means that the ratio of the two sides is bounded from above and below by positive constants. The sign $\lesssim$ ($\gtrsim$) means that the LHS is bounded by positive constant times the RHS from above (below).

\section{Proof of Proposition \ref{prop_SGSC} for the SG}\label{sec_SG}

Let $K$ be the SG in $\R^2$, that is, let $p_0=0$, $p_1=(1,0)$, $p_2=(\frac{1}{2},\frac{\sqrt{3}}{2})$ and $f_i(x)=\frac{1}{2}(x+p_i)$, $x\in\R^2$, $i=0,1,2$, then $K$ is the unique non-empty compact set in $\R^2$ satisfying $K=\cup_{i=0}^2f_i(K)$, see Figure \ref{fig_SG}. Let $|\cdot|$ be the Euclidean metric in $\R^2$ and $\nu$ the normalized Hausdorff measure of dimension $\alpha=\frac{\log3}{\log2}$ on $K$. Then $(K,|\cdot|,\nu)$ is an $\alpha$-regular compact metric measure space.

\begin{figure}[ht]
\centering
\includegraphics[width=0.4\textwidth]{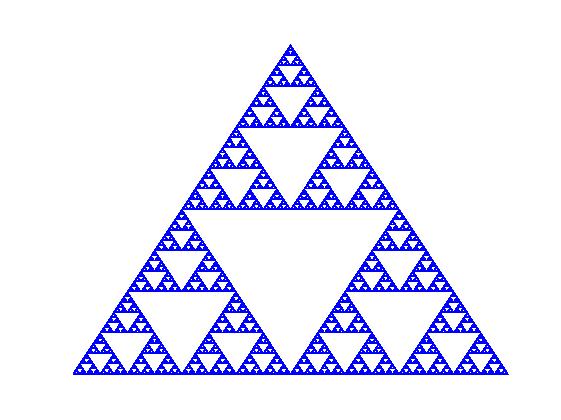}
\caption{The SG in $\R^2$}\label{fig_SG}
\end{figure}

Denote $l(S)$ as the set of all real-valued functions on a set $S$.

We list the characterization of local regular Dirichlet form on the SG as follows, see \cite{Kig89,Kig93,Kig01} for reference.

Let $W_0=\myset{\emptyset}$ and
$$W_n=\myset{w=w_1\ldots w_n:w_i=0,1,2,i=1,\ldots,n}\text{ for any }n\ge1.$$
For any $w^{(1)}=w^{(1)}_1\ldots w^{(1)}_m\in W_m$, $w^{(2)}=w^{(2)}_1\ldots w^{(2)}_n\in W_n$, let
$$w^{(1)}w^{(2)}=w^{(1)}_1\ldots w^{(1)}_mw^{(2)}_1\ldots w^{(2)}_n\in W_{m+n}.$$

Let $V_0=\myset{p_0,p_1,p_2}$ and $V_{n+1}=\cup_{i=0}^2f_i(V_n)$ for any $n\ge0$, then $\myset{V_n}_{n\ge1}$ is an increasing sequence of finite subsets of $K$ and the closure of $V_*=\cup_{n\ge1}V_n$ is $K$. For any $n\ge1$, for any $w=w_1\ldots w_n\in W_n$, let
$$V_w=f_{w_1}\circ\ldots\circ f_{w_n}(V_0),$$
then $V_n=\cup_{w\in W_n}V_w$.

For any $n\ge1$, for any $u\in l(V_n)$, let
$$a_n(u)=\left(\frac{5}{3}\right)^n\sum_{w\in W_n}\sum_{p,q\in V_w}(u(p)-u(q))^2,$$
then for any $m\le n$, for any $u\in l(V_n)$, we have $a_m(u)\le a_{n}(u)$.

We have $\beta^*=\frac{\log5}{\log2}$ and $(\calE_\loc,\calF_\loc)$ on $L^2(K;\nu)$ can be characterized as follows.
\begin{align*}
&\calE_\loc(u,u)=\lim_{n\to+\infty}a_n(u)=\sup_{n\ge1}a_n(u),\\
&\calF_\loc=\myset{u\in C(K):\sup_{n\ge1}a_n(u)<+\infty}.
\end{align*}

We have the characterization of the domain of stable-like non-local Dirichlet form as follows, see \cite[Theorem 1.1]{Yan18}. For any $\beta\in(\alpha,\beta^*)$, we have
$$\calF_\beta=\myset{u\in C(K):\sum_{n=1}^{+\infty}2^{(\beta-\beta^*)n}a_n(u)<+\infty}.$$

Therefore, it is obvious that $\calF_\loc\subseteq\cap_{\beta\in(0,\beta^*)}\calF_\beta$, to show that $\calF_\loc\subsetneqq\cap_{\beta\in(0,\beta^*)}\calF_\beta$, we only need to construct $u\in C(K)$ such that $a_n(u)=n$ for any $n\ge1$.

We construct $u\in l(V_*)$ by induction as follows.

For $n=1$. Let $u\in l(V_1)$ be given by $u=\frac{\sqrt{30}}{10}1_{\myset{p_0}}$, then $a_1(u)=1$.

Assume that we have constructed $u\in l(V_n)$ satisfying $a_n(u)=n$. Then for $n+1$, we only need to extend $u\in l(V_n)$ to a function on $V_{n+1}$ still denoted by $u\in l(V_{n+1})$.

Recall that
\begin{align*}
a_{n+1}(u)&=\left(\frac{5}{3}\right)^{n+1}\sum_{w\in W_{n+1}}\sum_{p,q\in V_w}(u(p)-u(q))^2\\
&=\left(\frac{5}{3}\right)^{n+1}\sum_{w\in W_{n}}\left(\sum_{i\in W}\sum_{p,q\in V_{wi}}(u(p)-u(q))^2\right).
\end{align*}

For any $w\in W_n$, we only need to assign the values of $u$ on $\cup_{i\in W}V_{wi}\backslash V_w$, see Figure \ref{fig_u}.

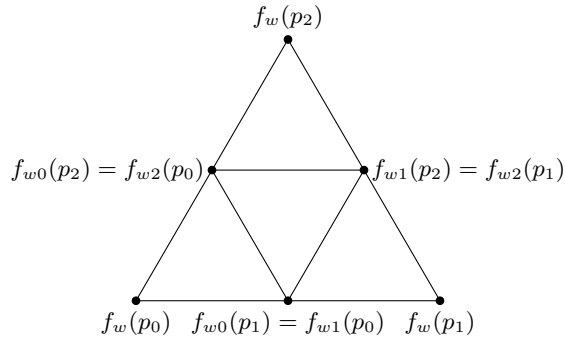
\begin{figure}[ht]
\centering
\begin{tikzpicture}[scale=1/4]

\draw (0,0)--(8,8*1.73205080756887)--(16,0)--cycle;
\draw (4,4*1.73205080756887)--(8,0)--(12,4*1.73205080756887)--cycle;

\draw[fill=black] (0,0) circle (0.2);
\draw[fill=black] (8,8*1.73205080756887) circle (0.2);
\draw[fill=black] (16,0) circle (0.2);
\draw[fill=black] (4,4*1.73205080756887) circle (0.2);
\draw[fill=black] (8,0) circle (0.2);
\draw[fill=black] (12,4*1.73205080756887) circle (0.2);

\draw (0,-1.2) node {{\footnotesize{$f_{w}(p_0)$}}};
\draw (16,-1.2) node {{\footnotesize{$f_{w}(p_1)$}}};
\draw (8,-1.2) node {{\footnotesize{$f_{w0}(p_1)=f_{w1}(p_0)$}}};
\draw (8,8*1.73205080756887+1.2) node {{\footnotesize{$f_{w}(p_2)$}}};
\draw (-1.5,4*1.73205080756887) node {{\footnotesize{$f_{w0}(p_2)=f_{w2}(p_0)$}}};
\draw (17.5,4*1.73205080756887) node {{\footnotesize{$f_{w1}(p_2)=f_{w2}(p_1)$}}};

\end{tikzpicture}

\caption{$\bigcup_{i\in W}V_{wi}$}\label{fig_u}
\end{figure}

Denote $a=u(f_w(p_0)),b=u(f_w(p_1)),c=u(f_w(p_2))$, let
\begin{align*}
u(f_{w1}(p_2))&=x=\alpha b+\alpha c+(1-2\alpha)a,\\
u(f_{w0}(p_2))&=y=\alpha c+\alpha a+(1-2\alpha)b,\\
u(f_{w0}(p_1))&=z=\alpha a+\alpha b+(1-2\alpha)c,
\end{align*}
where $\alpha\in(0,\frac{1}{2})$ is some parameter, then
\begin{align*}
&\sum_{i\in W}\sum_{p,q\in V_{wi}}(u(p)-u(q))^2\\
&=(a-z)^2+(b-z)^2+(a-y)^2+(c-y)^2+(b-x)^2+(c-x)^2\\
&+(x-y)^2+(y-z)^2+(z-x)^2\\
&=(15\alpha^2-12\alpha+3)\left((a-b)^2+(b-c)^2+(c-a)^2\right)\\
&=(15\alpha^2-12\alpha+3)\sum_{p,q\in V_w}(u(p)-u(q))^2.
\end{align*}
Let $\vphi(\alpha)=15\alpha^2-12\alpha+3$, $\alpha\in(0,\frac{1}{2})$, then
$$\min_{\alpha\in(0,\frac{1}{2})}\vphi(\alpha)=\vphi\left(\frac{2}{5}\right)=\frac{3}{5},\lim_{\alpha\downarrow0}\vphi(\alpha)=3,\lim_{\alpha\uparrow\frac{1}{2}}\vphi(\alpha)=\frac{3}{4},$$
hence there exists $\alpha_n\in(0,\frac{2}{5})$ such that $\vphi(\alpha_n)=\frac{3}{5}\frac{n+1}{n}$. Then we have the definition of $u$ on $\cup_{i\in W}V_{wi}$. Then we have the definition of $u$ on $V_{n+1}$. Moreover, $a_{n+1}(u)=\frac{n+1}{n}a_n(u)=n+1$.

By induction principle, we obtain $u\in l(V_*)$ satisfying $a_n(u)=n$ for any $n\ge1$. Since $\alpha_n\uparrow\frac{2}{5}$ as $n\to+\infty$, it is obvious that $u$ is uniformly continuous on $V_*$, hence $u$ can be extended to a continuous function on $K$ still denoted by $u\in C(K)$. Hence $u\in\cap_{\beta\in(0,\beta^*)}\calF_\beta\backslash\calF_\loc$.

\section{Proof of Proposition \ref{prop_SGSC} for the SC}\label{sec_SC}

Let $K$ be the SC in $\R^2$, that is, let
\begin{align*}
&p_0=(0,0),p_1=(\frac{1}{2},0),p_2=(1,0),p_3=(1,\frac{1}{2}),\\
&p_4=(1,1),p_5=(\frac{1}{2},1),p_6=(0,1),p_7=(0,\frac{1}{2}),
\end{align*}
and
$$f_i(x)=\frac{1}{3}(x-p_i)+p_i,x\in\R^2,i=0,1,\ldots,7,$$
then $K$ is the unique non-empty compact set in $\R^2$ satisfying $K=\cup_{i=0}^7f_i(K)$, see Figure \ref{fig_SC}. Let $|\cdot|$ be the Euclidean metric in $\R^2$ and $\nu$ the normalized Hausdorff measure of dimension $\alpha=\frac{\log8}{\log3}$ on $K$. Then $(K,|\cdot|,\nu)$ is an $\alpha$-regular compact metric measure space.

\begin{figure}[ht]
\centering
\includegraphics[width=0.74\textwidth]{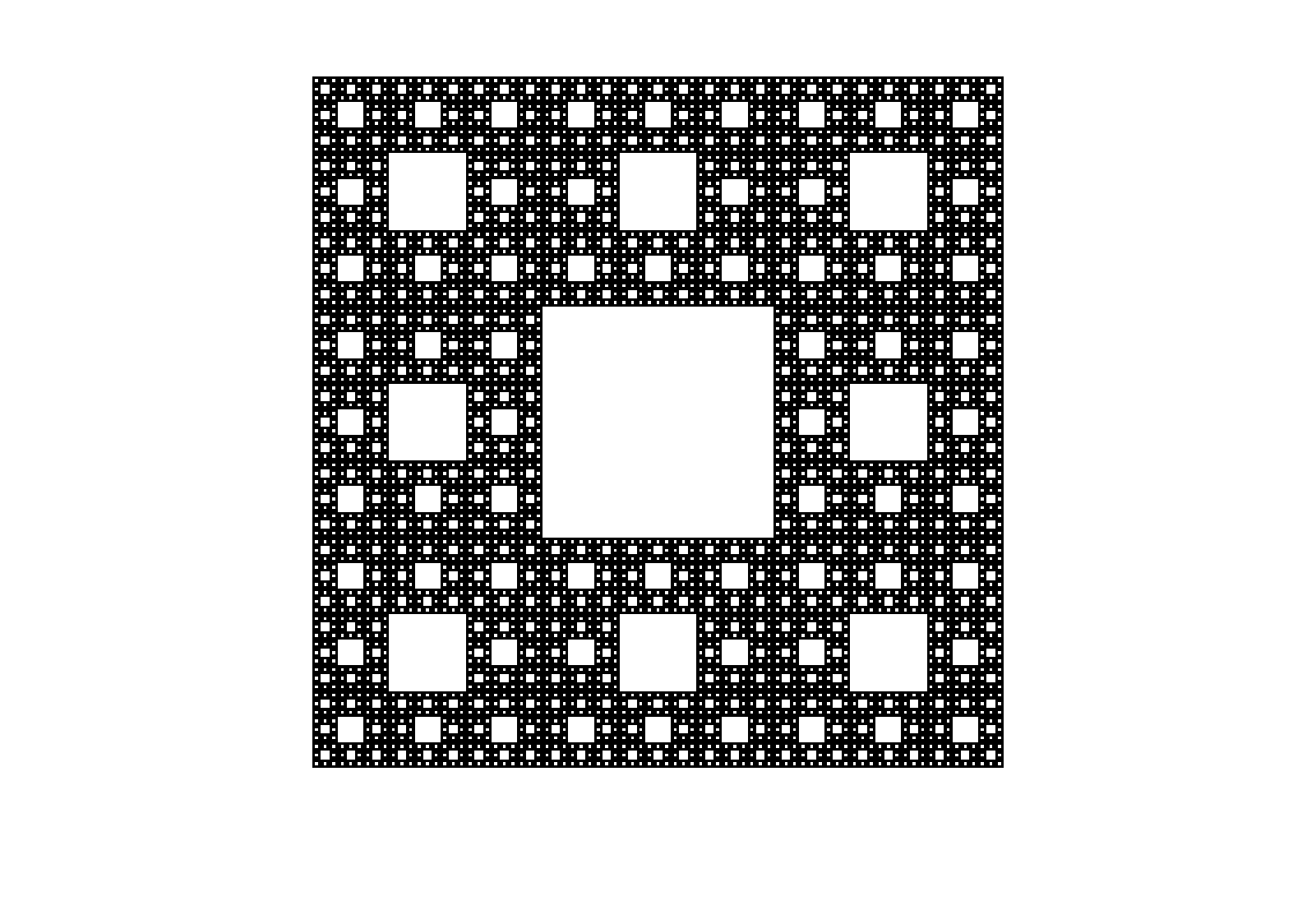}
\caption{The SC in $\R^2$}\label{fig_SC}
\end{figure}

Denote $l(S)$ as the set of all real-valued functions on a set $S$.

We list the characterization of local regular Dirichlet form on the SC from \cite{GY19} as follows, see also \cite{BB89,KZ92} for reference.

Let $W_0=\myset{\emptyset}$ and
$$W_n=\myset{w=w_1\ldots w_n:w_i=0,1,\ldots,7,i=1,\ldots,n}\text{ for any }n\ge1.$$
For any $w^{(1)}=w^{(1)}_1\ldots w^{(1)}_m\in W_m$, $w^{(2)}=w^{(2)}_1\ldots w^{(2)}_n\in W_n$, let
$$w^{(1)}w^{(2)}=w^{(1)}_1\ldots w^{(1)}_mw^{(2)}_1\ldots w^{(2)}_n\in W_{m+n}.$$
For any $i=0,1,\ldots,7$, let
$$i^n=\underbrace{i\ldots i}_{n\ \text{times}}\in W_n.$$

Let $V_0=\myset{p_0,p_1,\ldots,p_7}$ and $V_{n+1}=\cup_{i=0}^7f_i(V_n)$ for any $n\ge0$, then $\myset{V_n}_{n\ge0}$ is an increasing sequence of finite subsets of $K$ and the closure of $V_*=\cup_{n\ge0}V_n$ is $K$. For any $n\ge0$, for any $w=w_1\ldots w_n\in W_n$, let
$$f_w=f_{w_1}\circ\ldots\circ f_{w_n},$$
$$V_w=f_{w}(V_0),K_w=f_{w}(K),$$
then $V_n=\cup_{w\in W_n}V_w$. We use the convention that $f_\emptyset=\mathrm{Id}$ is the identity map.

Let $\rho$ be the parameter from resistance estimates in \cite{BB90,BBS90}, then
$$\beta^*=\frac{\log(8\rho)}{\log3}.$$
It was given in \cite{BB90,BBS90} that
\begin{itemize}
\item $\rho\in[\frac{7}{6},\frac{3}{2}]$ based on shorting and cutting technique,
\item $\rho\in[1.25147,1.25149]$ based on numerical calculation.
\end{itemize}
Hence $\rho>\frac{7}{6}$ which is a crucially important fact in the proof.

For any $n\ge0$, for any $u\in l(V_n)$, let
$$a_n(u)=\rho^n\sum_{w\in W_n}
{\sum_{\mbox{\tiny
$
\begin{subarray}{c}
p,q\in V_w\\
|p-q|=2^{-1}\cdot3^{-n}
\end{subarray}
$
}}}
(u(p)-u(q))^2.$$

By \cite[Theorem 2.5]{GY19}, $(\calE_\loc,\calF_\loc)$ on $L^2(K;\nu)$ can be characterized as follows.
\begin{align}
&\calE_\loc(u,u)\asymp\sup_{n\ge1}a_n(u),\nonumber\\
&\calF_\loc=\myset{u\in C(K):\sup_{n\ge1}a_n(u)<+\infty}.\label{eqn_domain_local}
\end{align}

We have the characterization of the domain of stable-like non-local Dirichlet form as follows, see \cite[Lemma 2.1]{GY19}. For any $\beta\in(\alpha,\beta^*)$, we have
\begin{equation}\label{eqn_domain_nonlocal}
\calF_\beta=\myset{u\in C(K):\sum_{n=1}^{+\infty}3^{(\beta-\beta^*)n}a_n(u)<+\infty}.
\end{equation}

Therefore, it is obvious that $\calF_\loc\subseteq\cap_{\beta\in(0,\beta^*)}\calF_\beta$, to show that $\calF_\loc\subsetneqq\cap_{\beta\in(0,\beta^*)}\calF_\beta$, we only need to prove the following result.

\begin{myprop}\label{prop_SC}
There exist $u\in C(K)$ and a universal positive constant $C$ satisfying $a_n(u)\le Cn$ for any $n\ge1$ and there exists some sequence $\myset{n_k}$ such that $a_{n_k}(u)\ge\frac{1}{C}n_k$ for any $k\ge1$.
\end{myprop}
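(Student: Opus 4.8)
The plan is to construct the function $u$ on the Sierpiński carpet by an inductive procedure analogous to the one used for the SG, but adapted to the infinitely ramified geometry: instead of freely assigning values at new vertices of a single cell $K_w$, I must specify a whole harmonic-type extension across the full grid $V_{n+1}$, since the cells of the SC overlap along line segments rather than at single points. Concretely, I would fix once and for all a reference "bump" profile supported near one corner $p_0$, built as a scaled copy of the local harmonic function with prescribed boundary data, and at stage $n$ I would place a rescaled copy of such a profile inside the corner cell $K_{0^n}$ (or a suitable nested sequence of cells $K_{0^{n_k}}$), with amplitude $\lambda_k$ chosen so that the contribution to $a_{n_k}$ is of order $1$. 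Because $\rho > 7/6$ (equivalently $\beta^* > \log(8\cdot 7/6)/\log 3 > \alpha$, and more importantly the energy on a cell of generation $m$ is amplified by $\rho^{-m}$ relative to its boundary data), the key point is that a fixed-size perturbation localized in a generation-$n_k$ cell contributes $\asymp 1$ to $a_{n_k}(u)$ but only $\asymp \rho^{-(n_k - m)}$-type decaying amounts to $a_m(u)$ for $m < n_k$, and a controlled bounded amount to each $a_m(u)$ for $m \ge n_k$ once the cells are chosen disjoint enough; summing the geometric tails over the at most $n$ active bumps below level $n$ gives $a_n(u) \le Cn$, while the single bump at level $n_k$ forces $a_{n_k}(u) \ge \frac{1}{C} n_k$ provided we let the amplitudes grow so that the cumulative energy is linear. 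I would take $n_k = k$ (or a sparse subsequence if overlap control requires it) and amplitudes tuned so exactly $k$ bumps are "visible" at scale $k$.

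First I would recall from \cite{GY19,BB90,BBS90} the basic scaling of $a_n$ under restriction to subcells and the existence of a local harmonic extension on the SC with bounded energy and a known multiplicative factor under one level of subdivision; this plays the role the function $\vphi(\alpha)$ played for the SG. Second, I would make precise the "localization" lemma: if $v$ is supported in $K_w$ with $|w| = m$ and agrees with $u$ outside, then $a_n(u)$ and $a_n(v\text{-perturbed }u)$ differ, for $n \ge m$, by an amount controlled by $\rho^{n-m}$ times the energy of the profile restricted to generation $n-m$ inside the model cell, which is summable in $n$ to a bound $\asymp \rho^0 = O(1)$ times the squared amplitude because the local form is conservative and the profile has finite local energy; and for $n < m$ the perturbation affects only the single coarse cell containing $K_w$, contributing $\asymp \rho^n 3^{-(\text{something})}\lambda^2$, again summably small. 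Third, I would sum the contributions of all bumps: $a_n(u) = \sum_k (\text{contribution of bump }k)$, split into $k \le n$ and $k > n$, and choose the $\lambda_k$ and the nesting so that the $k \le n$ part is $\le Cn$ and the diagonal term $k = n_k$ is $\ge \frac{1}{C} n_k$. Finally, uniform continuity of $u$ (hence $u \in C(K)$) follows because the bumps are placed in strictly nested shrinking cells with amplitudes growing only like a fixed power, so the oscillation of $u$ over $K_w$ tends to $0$ as $|w| \to \infty$, exactly as in the SG argument where $\alpha_n \uparrow 2/5$.

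The main obstacle I anticipate is the overlap/interaction between bumps and the fact that on the SC one cannot prescribe values at a single vertex without disturbing a whole face shared by neighboring cells: I must arrange the successive bumps to live in cells that do not share boundary segments, or else carefully bound the cross terms $(u(p)-u(q))^2$ along shared edges where two bump-profiles meet. This forces a genuinely global choice of the extension at each stage rather than the cell-by-cell independence available on the SG, and it is why — as the remark in the excerpt warns — the SG construction "cannot be applied on the SC at all." Concretely, the hard estimate is showing that the sum $\sum_{n \ge m} \rho^{n-m} \cdot (\text{energy of model profile at generation } n-m)$ converges, i.e. that the model corner profile has \emph{finite $\calE_\loc$-energy} and that this energy distributes across scales in a way compatible with the $\rho^n$ weights; this is precisely the statement that the harmonic extension on the SC is in $\calF_\loc$, which is available from \cite{BB89,KZ92,GY19}, but pinning down the constants so that $a_n(u) \le Cn$ with a \emph{universal} $C$ (uniform in $n$ and in the number of bumps) requires care in how fast the amplitudes $\lambda_k$ are allowed to grow — I expect $\lambda_k \asymp 1$ with the linear growth of $a_{n_k}$ coming from the \emph{count} of simultaneously visible bumps rather than from growing amplitudes, mirroring $a_n(u) = n$ on the SG.
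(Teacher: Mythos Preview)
Your overall strategy---stack localized bumps so that $a_n(u)$ grows with the number of bumps active at scale $n$---is reasonable and can be made to work, but the proposal leaves unresolved exactly the obstacle you yourself flag: cross terms along shared faces. The resolution is simple and you have not identified it: take the building block to vanish on the \emph{entire} boundary square $L=(\{0,1\}\times[0,1])\cup([0,1]\times\{0,1\})$. The paper proves (Lemma~\ref{lem_u1}) that there exists $u_1\in C(K)$ with $u_1|_L=0$ and $E_n(u_1)\asymp\rho^{-n}$ for all $n\ge1$, obtained as a limit of discrete equilibrium potentials between the inner square $S$ and $L$ via the resistance estimates of \cite{BB90,BBS90} and a uniform Harnack inequality. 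With this in hand, any patchwork of scaled copies of $u_1$ placed in cells $K_{w_k}$ is automatically continuous and its $E_n$ is an \emph{exact} sum over the pieces---the cross terms you worry about are identically zero, not merely controllable. Your one-bump-per-generation scheme then works cleanly: take $w_k=0^{k-1}2$ (so the $K_{w_k}$ are pairwise disjoint) and amplitudes $\delta_k=\rho^{-k/2}$; each bump contributes $\asymp1$ to $a_n$ for every $n>k$ and exactly $0$ for $n\le k$ (not $\rho^{-(n_k-m)}$ as you wrote), giving $a_n(u)\asymp n$ for all $n\ge2$. Note the amplitudes must \emph{decay}, and the inequality $\rho>7/6$ plays no role in this route.

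The paper's own argument is different and more elaborate. It introduces a second auxiliary function $u_2\in C(K)$ (Lemma~\ref{lem_u2}), also vanishing on $L$, with $E_n(u_2)\asymp(6/7)^n$; this is where $\rho>7/6$ actually enters, since it guarantees that $(6/7)^n$ eventually dominates $n\rho^{-n}$. The construction is an inductive replacement scheme: at stage $k$, the region $K\setminus K_{0^{n_{k-1}-1}}$ has been tiled by $u_1$-copies (contributing $\lesssim n\rho^{-n}$ to $E_n$), while the corner cell $K_{0^{n_{k-1}-1}}$ carries a tiny $u_2$-copy whose energy grows like $(6/7)^{n-n_{k-1}}$ and must therefore push $E_n$ past the target $Cn\rho^{-n}$ at some finite $n_k$; at that moment one subdivides the corner, fills most of it with a fresh batch of $u_1$-copies at amplitude tuned to hit $\frac{1}{2C}n_k\rho^{-n_k}$, and plants a new, even smaller $u_2$-copy in a deeper corner. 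Thus $u_2$ serves only as a \emph{detector} guaranteeing that each $n_k$ exists; the lower bound along $\{n_k\}$ comes from the frozen $u_1$-tilings. Your simpler scheme, once equipped with the zero-boundary building block, bypasses $u_2$ and the $\rho>7/6$ hypothesis entirely and in fact yields the two-sided bound $a_n(u)\asymp n$ at \emph{every} scale, not just along a subsequence.
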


\begin{proof}[Proof of Proposition \ref{prop_SGSC} using Proposition \ref{prop_SC}]
Since $a_n(u)\le Cn$ for any $n\ge1$, by Equation (\ref{eqn_domain_nonlocal}), we have $u\in\calF_\beta$ for any $\beta\in(\alpha,\beta^*)$. Since $a_{n_k}(u)\ge\frac{1}{C}n_k$ for any $k\ge1$, by Equation (\ref{eqn_domain_local}), we have $u\not\in\calF_\loc$. Hence $u\in\cap_{\beta\in(\alpha,\beta^*)}\calF_\beta\backslash\calF_\loc=\cap_{\beta\in(0,\beta^*)}\calF_\beta\backslash\calF_\loc$.
\end{proof}

Denote
$$L=\left(\myset{0,1}\times[0,1]\right)\bigcup\left([0,1]\times\myset{0,1}\right).$$

For any $n\ge0$, for any $u\in l(V_n)$, denote
$$E_n(u)=\sum_{w\in W_n}
{\sum_{\mbox{\tiny
$
\begin{subarray}{c}
p,q\in V_w\\
|p-q|=2^{-1}\cdot3^{-n}
\end{subarray}
$
}}}
(u(p)-u(q))^2.$$

We construct two functions in $C(K)$ that vanish on $L$ and have explicit asymptotic behaviours of $E_n$ as follows.

\begin{mylem}\label{lem_u1}
There exist $u_1\in C(K)$ and a universal positive constant $C$ satisfying $0\le u_1\le1$ on $K$, $u_1|_L=0$ and
$$\frac{1}{C}\left(\frac{1}{\rho}\right)^n\le E_n(u_1)\le C\left(\frac{1}{\rho}\right)^n\text{ for any }n\ge1.$$
\end{mylem}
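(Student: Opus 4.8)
The plan is to produce a single function $u_1\in\calF_\loc$ with $0\le u_1\le1$ that vanishes on $L$ and equals $1$ on the boundary $J:=\partial([1/3,2/3]^2)\cap K$ of the central square removed at the first stage of the construction (so $J$ is a compact subset of $K$ disjoint from $L$), and then to read off the two energy bounds. Such a $u_1$ can be built from the $1$-order equilibrium potential $e_L$ of $L$, which exists in $\calF_\loc$ because $1\in\calF_\loc$ forces $L$ to have finite capacity: one has $e_L\in C(K)$ by (\ref{eqn_domain_local}), $0\le e_L\le1$, $e_L=1$ on $L$ and $e_L<1$ off $L$, whence $M:=\max_J e_L<1$ by compactness. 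Put
$$u_1:=\frac{(1-e_L)\wedge(1-M)}{1-M},$$
a Lipschitz function of $e_L$, so that $u_1\in\calF_\loc\cap C(K)$; one checks directly that $0\le u_1\le1$, $u_1=0$ on $L$, $u_1=1$ on $J$, and $u_1$ is non-constant.

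\textbf{Upper bound.} This is immediate: since $u_1\in\calF_\loc$, the characterisation (\ref{eqn_domain_local}) gives $\sup_{n\ge1}a_n(u_1)<+\infty$, that is $E_n(u_1)=\rho^{-n}a_n(u_1)\le C\rho^{-n}$ for all $n\ge1$.

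\textbf{Lower bound.} Here I would use the discrete Thomson/Dirichlet principle on the approximating networks. Fix $n\ge1$. The sets $L\cap V_n\supseteq V_0$ and $J\cap V_n$ are non-empty and disjoint, and $u_1|_{V_n}$ equals $0$ on the former and $1$ on the latter, so $u_1|_{V_n}$ is an admissible voltage for the condenser $(L\cap V_n,\,J\cap V_n)$ in the network on $V_n$ with unit conductances; hence $E_n(u_1)\ge R_n^{-1}$, where $R_n$ is the corresponding effective resistance. It then remains to show $R_n\le C\rho^{\,n}$. By Rayleigh monotonicity — shrink the first terminal to the bottom side of the first-level cell $f_1(K)$ (a subset of $L$) and the second to its top side (a subset of $J$, being part of $\partial([1/3,2/3]^2)$), then delete every edge lying outside $f_1(K)$ — $R_n$ is at most the effective resistance between the bottom and top sides of $f_1(K)$ in the level-$n$ network restricted to $f_1(K)$; that restricted network is a rescaled copy of the level-$(n-1)$ approximation of the whole carpet, so by the across-the-carpet resistance estimate of \cite{BB90,BBS90} this resistance is $\asymp\rho^{\,n-1}\asymp\rho^{\,n}$. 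Thus $E_n(u_1)\ge\tfrac1C\rho^{-n}$, and together with the upper bound this gives the claim after enlarging $C$.

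\textbf{Main obstacle.} The one genuinely delicate input is the resistance bound $R_n\lesssim\rho^{\,n}$, and this is exactly where the infinite ramification of the SC matters: unlike on the SG there is no exact renormalisation identity, so two-sided control of effective resistances must be imported from the shorting/cutting estimates of \cite{BB90,BBS90} (the same analysis behind the fact $\rho>7/6$). For the present lemma only the upper bound for resistances is used, and only for one standard configuration, so I expect this step to be routine; the construction of $u_1$ and the upper energy bound are soft.
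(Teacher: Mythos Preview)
Your argument is correct, and the overall architecture matches the paper's: build a continuous function with values $0$ on $L$ and $1$ on the boundary $J$ (the paper's $S$) of the central removed square, get the upper energy bound from $u_1\in\calF_\loc$, and get the lower bound from the Dirichlet principle together with the across-the-carpet resistance upper estimate $R_n\lesssim\rho^n$ of \cite{BB90,BBS90}.

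Where you genuinely diverge from the paper is in the \emph{construction} of $u_1$. The paper works on the discrete side: it takes for each $n$ the optimal potential $u^{(n)}$ realising $R_n(S,L)$, invokes the uniform Harnack inequality of \cite[Section~8]{GY19} to pass to a continuous limit, and uses the weak monotonicity result \cite[Theorem~7.1]{GY19} to control $a_n(u_1)$; in particular it consumes the \emph{lower} resistance bound $R_n\gtrsim\rho^n$ as part of the upper energy estimate. Your route is softer: you pull $u_1$ directly out of the Dirichlet-form machinery as a normal contraction of the $1$-equilibrium potential $e_L$, so $u_1\in\calF_\loc$ is automatic and the upper energy bound needs no resistance or Harnack input at all. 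This is a legitimate simplification; the price is that your $u_1$ is less explicit and you rely on the probabilistic fact $e_L<1$ off $L$ (which is fine here since $\calF_\loc\subset C(K)$ and the SC diffusion has continuous paths). For the lower bound the two proofs coincide in spirit, and your Rayleigh-monotonicity reduction to a single level-$1$ cell is a clean way to land on the standard across-the-carpet estimate.
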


\begin{mylem}\label{lem_u2}
There exist $u_2\in C(K)$ and a universal positive constant $C$ satisfying $0\le u_2\le1$ on $K$, $u_2|_L=0$ and
$$\frac{1}{C}\left(\frac{6}{7}\right)^n\le E_n(u_2)\le C\left(\frac{6}{7}\right)^n\text{ for any }n\ge1.$$
\end{mylem}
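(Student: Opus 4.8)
The plan is to construct $u_2$ as a self-similar-type function on the carpet that is ``supported'' along a chain of one-dimensional cells, so that its energy at level $n$ is governed by a one-dimensional resistance and decays like $(6/7)^n$ rather than like $(1/\rho)^n$. Concretely, I would fix the corner cell $K_{0}$ and, more generally, iterate along the diagonal word $0^n$, but the key is that on the carpet a \emph{row} of cells at scale $3^{-1}$ forms (topologically) an interval-like object: there are $3$ cells in a side row, and the effective resistance across a single small cell along such a one-dimensional path is comparable to a constant while there are $3$ of them in series and the whole side has combinatorial length scaling like $3^n$ at level $n$. The factor $6/7$ should arise as follows: if $\psi$ is the level-$1$ building block, one arranges that $E_1(u_2)$ picks up a factor, and passing from level $n$ to level $n+1$ multiplies $E_n$ by a fixed ratio $r$; choosing the block so that this ratio is exactly $6/7$ (which is possible because $6/7$ lies in the admissible interval of contraction ratios, analogous to the range $(\tfrac 35\cdot\tfrac{n+1}{n},3)$ exploited in the SG construction, here replaced by the interval dictated by $\rho>7/6$, i.e. $1/\rho<6/7$) gives $E_n(u_2)\asymp (6/7)^n$. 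The requirement $u_2|_L=0$ is arranged by taking the block to vanish on the boundary square $L$ and to be built only out of the contracted copies $f_w$ with $w$ avoiding the outermost ``frame''; harmonicity/minimality of the block is not needed, only that it is continuous, bounded in $[0,1]$, vanishes on $L$, and has a clean self-similar energy identity.

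The key steps, in order: (1) Define the level-$1$ function $\psi\in C(K)$ with $0\le\psi\le 1$, $\psi|_L=0$, which is $1$ at the center $(\tfrac12,\tfrac12)$ (or at an appropriate interior vertex of $V_1$) and interpolates through $V_1$ so that $E_1(\psi)>0$; compute $E_1(\psi)$ explicitly. (2) Set up the inductive extension: given $u_2$ defined on $V_n$ with $E_n(u_2)$ known, extend to $V_{n+1}$ by placing, in each cell $K_w$ ($w\in W_n$), a rescaled-and-shifted copy of $\psi$ whose boundary values match $u_2$ on $V_w$; because $\psi|_L=0$, these copies glue continuously and the new contribution on $K_w$ equals (a fixed factor) times $\sum_{p,q\in V_w,\,|p-q|=2^{-1}3^{-n}}(u_2(p)-u_2(q))^2$ plus the internal energy of the copy of $\psi$. (3) Track the recursion $E_{n+1}(u_2)=\lambda E_n(u_2)+\mu\cdot 8^{\,?}\cdot(\text{internal})$ and verify that by a suitable normalization the dominant behaviour is $E_n(u_2)\asymp (6/7)^n$; here one must be careful that the ``internal'' term (copies of $\psi$'s own energy, summed over $8^n$ cells but scaled down) does not overwhelm the boundary-matching term — this is where $6/7$ being strictly larger than the internal scale $1/8^{\text{something}}$ matters. (4) Conclude $u_2\in C(K)$ by uniform continuity (the oscillation of $u_2$ on $K_w$ is controlled by $\max_{V_w}u_2-\min_{V_w}u_2$ times a universal constant, which tends to $0$), and read off $a_n(u_2)=\rho^n E_n(u_2)\asymp (\rho\cdot 6/7)^n$ — note $\rho\cdot 6/7>1$ since $\rho>7/6$, which is exactly the feature making $u_2$ useful for the proof of Proposition \ref{prop_SC} (one of $u_1,u_2$ will have $a_{n_k}$ growing, suitably combined).

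The main obstacle I anticipate is step (2)–(3): on the \emph{infinitely ramified} carpet one cannot freely prescribe values on the interface between adjacent cells by finitely many vertices — the cells $K_w$ and $K_{w'}$ share an entire Cantor-like arc, not just two points of $V_*$. So the naive ``match at $V_w$'' gluing does not automatically produce a continuous function on $K$; one must instead define $u_2$ so that its restriction to each shared face is \emph{intrinsically} determined (e.g. depends only on the face, not on which side's copy of $\psi$ one uses), for instance by making $\psi$ depend on the relevant boundary coordinate in a product/affine way along faces. This is precisely the ``intrinsically analytic difference between the SG and the SC'' flagged in the remarks, and handling it correctly — probably by choosing $\psi$ to be, along each side of the unit square, an explicit affine (or piecewise-affine) function of one coordinate, so that consistency across faces is built in — is the crux. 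Once the gluing is consistent, the energy identity and the $(6/7)^n$ asymptotics follow from the same kind of bookkeeping as the SG argument, together with the resistance estimates of \cite{BB90,BBS90,BBS90} that pin the relevant constant to lie in the interval containing $6/7$. The lower bound $E_n(u_2)\ge \tfrac1C(6/7)^n$ is the easier half (test the energy against the obviously nonzero gradient along the chain); the upper bound requires the sub-multiplicativity of the recursion and is where a clean choice of $\psi$ pays off.
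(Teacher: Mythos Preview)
Your proposal is not a proof but a plan, and the plan has a genuine gap at exactly the point you yourself flag: you never actually construct the building block $\psi$, and your mechanism for producing the ratio $6/7$ is incorrect. The number $6/7$ has nothing to do with the carpet resistance $\rho$ or with ``the admissible interval of contraction ratios'' analogous to the SG argument; invoking \cite{BB90,BBS90} here is a red herring. In the paper, $6/7$ arises from a purely \emph{one-dimensional} problem: there is an explicit strictly increasing $f\in C([0,1])$ with $f(0)=0$, $f(1)=1$, defined recursively by
\[
f\Bigl(\tfrac{3i+1}{3^{n+1}}\Bigr)=\tfrac{5}{7}f\Bigl(\tfrac{i}{3^n}\Bigr)+\tfrac{2}{7}f\Bigl(\tfrac{i+1}{3^n}\Bigr),\qquad
f\Bigl(\tfrac{3i+2}{3^{n+1}}\Bigr)=\tfrac{2}{7}f\Bigl(\tfrac{i}{3^n}\Bigr)+\tfrac{5}{7}f\Bigl(\tfrac{i+1}{3^n}\Bigr),
\]
for which one computes directly $E_n\bigl((x,y)\mapsto f(x)\bigr)=(6/7)^n$, and $f$ is the minimizer among all $g\in C([0,1])$ with $g(0)=0$, $g(1)=1$. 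Your recursion $E_{n+1}=\lambda E_n+\mu\cdot 8^{?}\cdot(\text{internal})$ is never made precise, and there is no reason to expect an inductive vertex-gluing scheme on the SC to isolate any particular constant, let alone $6/7$.

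The paper sidesteps the infinite-ramification obstacle entirely by a device you only gesture at in your last paragraph: it takes $u_2$ to be a \emph{global product of one-variable functions},
\[
u_2(x,y)=f(x)\,f(1-x)\,f(y)\,f(1-y),
\]
which is automatically continuous on $K$, takes values in $[0,1]$, and vanishes on $L$ because $f(0)=0$. No gluing is needed and no self-similar identity for $u_2$ is invoked. The upper bound $E_n(u_2)\le 16(6/7)^n$ follows from a Minkowski-type product inequality $\sqrt{E_n(uv)}\le \max|u|\sqrt{E_n(v)}+\max|v|\sqrt{E_n(u)}$ applied four times. The lower bound---contrary to your claim that it is the ``easier half''---is the more delicate part: one restricts to a single level-$2$ cell $K_{50}$ on which the $y$-factors $f(y)f(1-y)$ are bounded away from zero, reduces to the energy of $(x,y)\mapsto f(x)f(1-x)$ on that cell, and then uses the \emph{optimality} of $f$ in the one-dimensional variational problem to bound this from below by a constant times $(6/7)^n$. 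None of this appears in your plan.
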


\begin{myrmk}
Since $u_1|_L=u_2|_L=0$, we have $E_0(u_1)=E_0(u_2)=0$. For any $n\ge1$, the lower estimates of $E_n(u_1)$ and $E_n(u_2)$ imply that $u_1$ and $u_2$ are not identically zero on $V_n$.
\end{myrmk}

\begin{proof}[Proof of Lemma \ref{lem_u1}]
Denote
$$S=\left(\myset{\frac{1}{3},\frac{2}{3}}\times\left[\frac{1}{3},\frac{2}{3}\right]\right)\bigcup\left(\left[\frac{1}{3},\frac{2}{3}\right]\times\myset{\frac{1}{3},\frac{2}{3}}\right).$$
For any $n\ge1$, let
$$R_n(S,L)=\inf\myset{E_n(u):u|_{S\cap V_n}=1,u|_{L\cap V_n}=0,u\in l(V_n)}^{-1},$$
then there exists a unique $u^{(n)}\in l(V_n)$ with $u^{(n)}|_{S\cap V_n}=1,u^{(n)}|_{L\cap V_n}=0$ such that $E_n(u^{(n)})^{-1}=R_n(S,L)$. By flow technique and potential technique from \cite{BB90,BBS90,McG02}, there exists a universal positive constant $C$ such that
$$\frac{1}{C}\rho^n\le R_n(S,L)\le C\rho^n\text{ for any }n\ge1.$$
By the same proof as \cite[Section 8]{GY19} using uniform Harnack inequality, we obtain $u_1\in C(K)$ satisfying $0\le u_1\le 1$ on $K$, $u_1|_S=1$, $u_1|_L=0$ and $a_n(u_1)\le cC$ for any $n\ge1$, where $c$ is the universal positive constant in weak monotonicity result \cite[Theorem 7.1]{GY19}. Hence
$$E_n(u_1)\le cC\left(\frac{1}{\rho}\right)^n\text{ for any }n\ge1.$$
By the optimal property of $u^{(n)}$, we have
$$E_n(u_1)\ge E_n(u^{(n)})=R_n(S,L)^{-1}\ge\frac{1}{C}\left(\frac{1}{\rho}\right)^n\text{ for any }n\ge1.$$
\end{proof}

\begin{proof}[Proof of Lemma \ref{lem_u2}]
Let $f\in C([0,1])$ be a strictly increasing function given by $f(0)=0$, $f(1)=1$ and for any $n\ge0$, for any $i=0,1,\ldots,3^{n}-1$
\begin{align*}
&f(\frac{3i+1}{3^{n+1}})=\frac{5}{7}f(\frac{i}{3^n})+\frac{2}{7}f(\frac{i+1}{3^n}),\\
&f(\frac{3i+2}{3^{n+1}})=\frac{2}{7}f(\frac{i}{3^n})+\frac{5}{7}f(\frac{i+1}{3^n}).
\end{align*}
By the proof on \cite[Page 4001]{GY19}, we have
$$E_n\left((x,y)\mapsto f(x)\right)=E_n\left((x,y)\mapsto f(y)\right)=\left(\frac{6}{7}\right)^n\text{ for any }n\ge1,$$
and for any $n\ge1$, $f$ is the unique optimal function of the following variational problem
$$\inf\myset{E_n\left((x,y)\mapsto f(x)\right)=E_n\left((x,y)\mapsto f(y)\right):f(0)=0,f(1)=1,f\in C([0,1])}.$$

Let $u_2\in C(K)$ be given by $u_2(x,y)=f(x)f(1-x)f(y)f(1-y)$, $(x,y)\in K$. It is obvious that $0\le u_2\le 1$ on $K$ and $u_2|_L=0$.

For any $n\ge1$, by Minkowski inequality, we have
$$\sqrt{E_n(uv)}\le\max_{V_n}|u|\sqrt{E_n(v)}+\max_{V_n}|v|\sqrt{E_n(u)}\text{ for any }u,v\in l(V_n),$$
hence
$$\sqrt{E_n(u_2)}\le4\sqrt{E_n\left((x,y)\mapsto f(x)\right)},$$
hence
$$E_n(u_2)\le16E_n\left((x,y)\mapsto f(x)\right)=16\left(\frac{6}{7}\right)^n\text{ for any }n\ge1.$$

For any $n\ge1$, we have
\begin{align*}
&E_{n+2}(u_2)\\
&\ge\sum_{w\in W_n}
{\sum_{\mbox{\tiny
$
\begin{subarray}{c}
p=(p_x,p_y),q=(q_x,q_y)\in V_{50w}\\
|p-q|=2^{-1}\cdot3^{-(n+2)}\\
p_y=q_y
\end{subarray}
$
}}}
\left(f(p_x)f(1-p_x)f(p_y)f(1-p_y)\right.\\
&\qquad\qquad\qquad\qquad\qquad\qquad\qquad\left.-f(q_x)f(1-q_x)f(q_y)f(1-q_y)\right)^2\\
&=\sum_{w\in W_n}
{\sum_{\mbox{\tiny
$
\begin{subarray}{c}
p=(p_x,p_y),q=(q_x,q_y)\in V_{50w}\\
|p-q|=2^{-1}\cdot3^{-(n+2)}\\
p_y=q_y
\end{subarray}
$
}}}
\left(f(p_x)f(1-p_x)-f(q_x)f(1-q_x)\right)^2f(p_y)^2f(1-p_y)^2.
\end{align*}
For any $p=(p_x,p_y)\in K_{50}$, we have $f(p_y)\in\left[\frac{5}{7},\frac{39}{49}\right]$, $f(1-p_y)\in\left[\frac{10}{49},\frac{2}{7}\right]$. Hence
\begin{align*}
&E_{n+2}(u_2)\\
&\ge\left(\frac{5}{7}\right)^2\left(\frac{10}{49}\right)^2
\sum_{w\in W_n}
{\sum_{\mbox{\tiny
$
\begin{subarray}{c}
p=(p_x,p_y),q=(q_x,q_y)\in V_{50w}\\
|p-q|=2^{-1}\cdot3^{-(n+2)}\\
p_y=q_y
\end{subarray}
$
}}}
\left(f(p_x)f(1-p_x)-f(q_x)f(1-q_x)\right)^2\\
&=\left(\frac{5}{7}\right)^2\left(\frac{10}{49}\right)^2
\sum_{w\in W_n}
{\sum_{\mbox{\tiny
$
\begin{subarray}{c}
p=(p_x,p_y),q=(q_x,q_y)\in V_{50w}\\
|p-q|=2^{-1}\cdot3^{-(n+2)}\\
\end{subarray}
$
}}}
\left(f(p_x)f(1-p_x)-f(q_x)f(1-q_x)\right)^2\\
&=\left(\frac{5}{7}\right)^2\left(\frac{10}{49}\right)^2E_n\left(\left((x,y)\mapsto f(x)f(1-x)\right)\circ f_{50}\right).
\end{align*}
Note that the function $\left((x,y)\mapsto f(x)f(1-x)\right)\circ f_{50}\in C(K)$ depends only on the first variable and maps $0$ to $\frac{10}{49}$ and $1$ to $\frac{580}{2401}$. By the optimal property of $f$, we have
$$E_n\left(\left((x,y)\mapsto f(x)f(1-x)\right)\circ f_{50}\right)\ge\left(\frac{580}{2401}-\frac{10}{49}\right)^2E_n\left((x,y)\mapsto f(x)\right)=\left(\frac{90}{2401}\right)^2\left(\frac{6}{7}\right)^n.$$
Hence
$$E_{n+2}(u_2)\ge\left(\frac{5}{7}\right)^2\left(\frac{10}{49}\right)^2\left(\frac{90}{2401}\right)^2\left(\frac{6}{7}\right)^n=\frac{562500}{13841287201}\left(\frac{6}{7}\right)^{n+2}\text{ for any }n\ge1.$$
Hence 
$$E_n(u_2)\ge C\left(\frac{6}{7}\right)^n\text{ for any }n\ge1,$$
where $C=\min\myset{\frac{7}{6}E_1(u_2),\left(\frac{7}{6}\right)^2E_2(u_2),\frac{562500}{13841287201}}$ is some universal positive constant.
\end{proof}

We only need to prove Proposition \ref{prop_SC} as follows.

\begin{proof}[Proof of Proposition \ref{prop_SC}]
In terms of $E_n$, we need to construct $u\in C(K)$ with a universal positive constant $C$ satisfying $E_n(u)\le Cn\left(\frac{1}{\rho}\right)^n$ for any $n\ge1$ and there exists some sequence $\myset{n_k}$ such that $E_{n_k}(u)\ge\frac{1}{C}n_k\left(\frac{1}{\rho}\right)^{n_k}$ for any $k\ge1$.

The idea of the proof is to use the fact that $\rho>\frac{7}{6}$ which implies that $\left(\frac{6}{7}\right)^n$ is much larger than $n\left(\frac{1}{\rho}\right)^n$ for any sufficiently large $n\ge1$ and the fact that $n\left(\frac{1}{\rho}\right)^n$ is much larger than $\left(\frac{1}{\rho}\right)^n$ for any sufficiently large $n\ge1$.

For any $A\subseteq K$, for any $n\ge1$, for any $u\in l(V_n)$, denote
$$E_n^A(u)=
{\sum_{\mbox{\tiny
$
\begin{subarray}{c}
w\in W_n\\
K_w\subseteq\mybar{A}
\end{subarray}
$
}}}
{\sum_{\mbox{\tiny
$
\begin{subarray}{c}
p,q\in V_w\\
|p-q|=2^{-1}\cdot3^{-n}
\end{subarray}
$
}}}
(u(p)-u(q))^2.$$

Let $C_1$ be the universal positive constant from Lemma \ref{lem_u1}. For simplicity, we may assume that $C_1$ is an integer. Take arbitrary integer $C\ge C_1+1$.

Let $u^{(1)}=\delta_2u_2$, where $\delta_2$ is some positive parameter to be determined, then
$$E_n(u^{(1)})=\delta_2^2E_{n}(u_2)\text{ for any }n\ge1.$$
Take $\delta_2>0$ such that
$$E_n(u^{(1)})=\delta_2^2E_n(u_2)<Cn\left(\frac{1}{\rho}\right)^n\text{ for any }n=1,2.$$
By Lemma \ref{lem_u2}, there exists $n_1>2$ such that
\begin{align*}
E_n(u^{(1)})&<Cn\left(\frac{1}{\rho}\right)^n\text{ for any }n=1,\ldots,n_1-1,\\
E_{n_1}(u^{(1)})&\ge Cn_1\left(\frac{1}{\rho}\right)^{n_1}.
\end{align*}

Define $u^{(2)}\in l(K)$ as follows. For any $w\in W_{n_1-1}$, let
$$
u^{(2)}|_{K_{w}}=
\begin{cases}
\delta_1 u_1\circ f_w,&\text{if }w\ne0^{n_1-1},\\
\delta_2 u_2\circ f_w,&\text{if }w=0^{n_1-1},
\end{cases}
$$
where $\delta_1,\delta_2$ are some positive parameters to be determined. Since $u_1|_L=u_2|_L=0$, we have $u^{(2)}\in C(K)$ is well-defined and
$$E_n(u^{(2)})=
\begin{cases}
0,&\text{if }n=1,\ldots,n_1-1,\\
\delta_1^2(8^{n_1-1}-1)E_{n-n_1+1}(u_1)+\delta_2^2E_{n-n_1+1}(u_2),&\text{if }n=n_1,n_1+1,\ldots.
\end{cases}
$$
Take $\delta_1\in(0,+\infty)$ such that
$$\delta_1^2(8^{n_1-1}-1)E_{1}(u_1)=\frac{1}{2C}n_1\left(\frac{1}{\rho}\right)^{n_1},$$
then
$$\delta_1=\sqrt{\frac{\frac{1}{2C}n_1\left(\frac{1}{\rho}\right)^{n_1}}{(8^{n_1-1}-1)E_{1}(u_1)}}\le\sqrt{\frac{n_1\left(\frac{1}{\rho}\right)^{n_1}}{14CE_{1}(u_1)}}.$$
By Lemma \ref{lem_u1}, we have
\begin{align*}
&\delta_1^2(8^{n_1-1}-1)E_{n-n_1+1}(u_1)\le C_1^2\left(\frac{1}{\rho}\right)^{n-n_1}\delta_1^2(8^{n_1-1}-1)E_{1}(u_1)\\
&=\frac{C_1^2}{2C}n_1\left(\frac{1}{\rho}\right)^{n}\le\frac{C_1^2}{2C}n\left(\frac{1}{\rho}\right)^n<Cn\left(\frac{1}{\rho}\right)^n\text{ for any }n=n_1,n_1+1,\ldots.
\end{align*}
Take $\delta_2\in(0,1)$ such that
\begin{align*}
E_{n}(u^{(2)})&=\delta_1^2(8^{n_1-1}-1)E_{n-n_1+1}(u_1)+\delta_2^2E_{n-n_1+1}(u_2)<Cn\left(\frac{1}{\rho}\right)^{n}\\
&\text{ for any }n=n_1,n_1+1,\ldots,2C_1^2C^2n_1,2C_1^2C^2n_1+1.
\end{align*}
Hence
$$E_n(u^{(2)})<Cn\left(\frac{1}{\rho}\right)^n\text{ for any }n=1,\ldots,2C_1^2C^2n_1,2C_1^2C^2n_1+1,$$
$$E_{n_1}^{K\backslash K_{0^{n_1-1}}}(u^{(2)})\ge\frac{1}{2C}n_1\left(\frac{1}{\rho}\right)^{n_1}.$$

Assume that we have constructed $u^{(k)}\in C(K)$ and $n_{k-1}>\ldots>n_1>2$ satisfying that $u^{(k)}|_{K_{0^{n_{k-1}-1}}}$ is the product of some positive parameter and $u_2\circ f_{0^{n_{k-1}-1}}$, $u^{(k)}|_{K\backslash K_{0^{n_{k-1}-1}}}$ is constructed by gluing the terms of the form $\delta u_1\circ f_w$ with $\delta\in(0,+\infty)$ and $w\in W_1\cup\ldots\cup W_{n_{k-1}-1}$, and
$$E_n(u^{(k)})<Cn\left(\frac{1}{\rho}\right)^n\text{ for any }n=1,\ldots,2C_1^2C^2n_{k-1},2C_1^2C^2n_{k-1}+1,$$
$$E^{K\backslash K_{0^{n_{k-1}-1}}}_{n_i}(u^{(k)})\ge\frac{1}{2C}n_i\left(\frac{1}{\rho}\right)^{n_i}\text{ for any }i=1,\ldots,k-1.$$
By Lemma \ref{lem_u2}, there exists $n_k>2C_1^2C^2n_{k-1}+1$ such that
\begin{align*}
E_n(u^{(k)})&<Cn\left(\frac{1}{\rho}\right)^n\text{ for any }n=1,\ldots, n_{k}-1,\\
E_{n_k}(u^{(k)})&\ge C{n_k}\left(\frac{1}{\rho}\right)^{n_k}.
\end{align*}

Define $u^{(k+1)}\in l(K)$ as follows. Let $u^{(k+1)}|_{K\backslash K_{0^{n_{k-1}-1}}}=u^{(k)}|_{K\backslash K_{0^{n_{k-1}-1}}}$ and for any $w\in W_{n_k-n_{k-1}}$, let
$$u^{(k+1)}|_{K_{0^{n_{k-1}-1}w}}=
\begin{cases}
\delta_1u_1\circ f_{0^{n_{k-1}-1}w},&\text{if }w\ne0^{n_{k}-n_{k-1}},\\
\delta_2u_2\circ f_{0^{n_{k-1}-1}w},&\text{if }w=0^{n_{k}-n_{k-1}},
\end{cases}
$$
where $\delta_1,\delta_2$ are some positive parameters to be determined, see Figure \ref{fig_ukplus1}.

\begin{figure}[ht]
\centering
\begin{tikzpicture}[scale=1/3]

\foreach \x in {0,1,2,...,27}
\draw (\x,0)--(\x,27);

\foreach \y in {0,1,2,...,27}
\draw (0,\y)--(27,\y);

\foreach \x in {3,12,21}
\foreach \y in {3,12,21}
\draw[fill=white] (\x,\y)--(\x+3,\y)--(\x+3,\y+3)--(\x,\y+3)--cycle;

\draw[fill=white] (9,9)--(18,9)--(18,18)--(9,18)--cycle;

\draw [decorate,decoration={brace,amplitude=2pt},xshift=-2,yshift=0] (0,0)--(0,1) node [black,midway,xshift=-15] {{\tiny{$K_{0^{n_k-1}}$}}};

\draw [decorate,decoration={brace,amplitude=10pt,mirror},xshift=2] (27,0)--(27,27) node [black,midway,xshift=33,yshift=-2] {{\footnotesize{$K_{0^{n_{k-1}-1}}$}}};

\end{tikzpicture}
\caption{The Construction of $u^{(k+1)}$}\label{fig_ukplus1}
\end{figure}
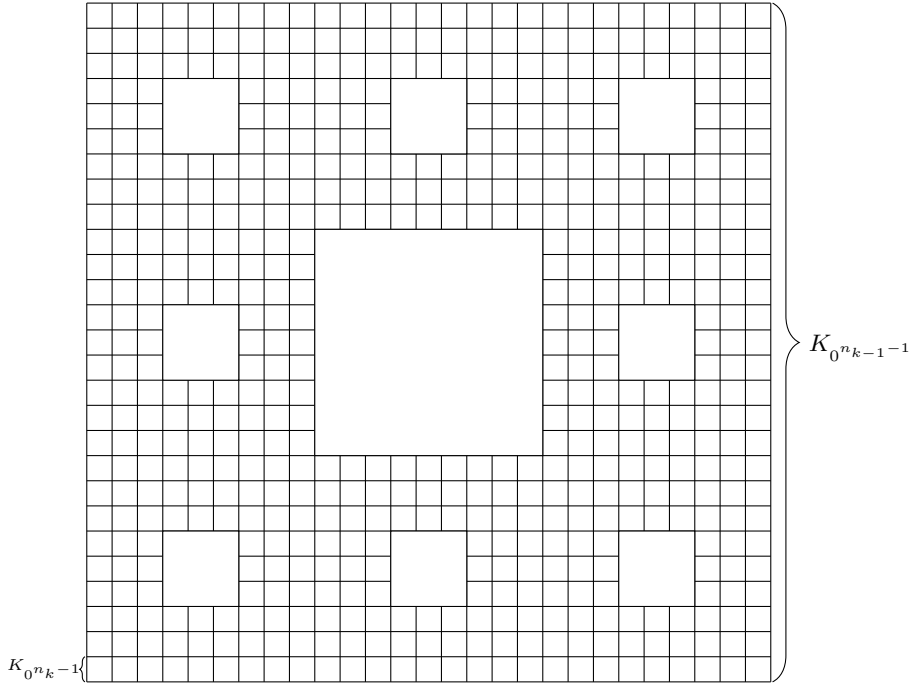

Since $u_1|_L=u_2|_L=0$, we have $u^{(k+1)}\in C(K)$ is well-defined, $u^{(k+1)}|_{K_{0^{n_{k}-1}}}$ is the product of some positive parameter and $u_2\circ f_{0^{n_{k}-1}}$, $u^{(k+1)}|_{K\backslash K_{0^{n_{k}-1}}}$ is constructed by gluing the terms of the form $\delta u_1\circ f_w$ with $\delta\in(0,+\infty)$ and $w\in W_1\cup\ldots\cup W_{n_{k}-1}$, and
\begin{align*}
&E_n(u^{(k+1)})\\
&=
\begin{cases}
E_n^{K\backslash K_{0^{n_{k-1}-1}}}(u^{(k)}),&\text{if }n=1,\ldots,n_k-1,\\
E_n^{K\backslash K_{0^{n_{k-1}-1}}}(u^{(k)})+\delta_1^2(8^{n_k-n_{k-1}}-1)E_{n-n_k+1}(u_1)\\
\qquad\qquad\qquad\qquad\qquad+\delta_2^2E_{n-n_k+1}(u_2),&\text{if }n=n_k,n_k+1,\ldots.\\
\end{cases}
\end{align*}
Hence
$$E_n(u^{(k+1)})=E_n^{K\backslash K_{0^{n_{k-1}-1}}}(u^{(k)})\le E_n(u^{(k)})<Cn\left(\frac{1}{\rho}\right)^n\text{ for any }n=1,\ldots,n_k-1,$$
\begin{align*}
E^{K\backslash K_{0^{n_k-1}}}_{n_i}(u^{(k+1)})\ge E^{K\backslash K_{0^{n_{k-1}-1}}}_{n_i}(u^{(k+1)})&=E^{K\backslash K_{0^{n_{k-1}-1}}}_{n_i}(u^{(k)})\ge\frac{1}{2C}n_i\left(\frac{1}{\rho}\right)^{n_i}\\
&\text{ for any }i=1,\ldots,k-1.
\end{align*}

Since $u^{(k+1)}|_{K\backslash K_{0^{n_{k-1}-1}}}=u^{(k)}|_{K\backslash K_{0^{n_{k-1}-1}}}$ is constructed by gluing the terms of the form $\delta u_1\circ f_w$ with $\delta\in(0,+\infty)$ and $w\in W_1\cup\ldots\cup W_{n_{k-1}-1}$, by Lemma \ref{lem_u1}, we have
\begin{align*}
&E_n^{K\backslash K_{0^{n_{k-1}-1}}}(u^{(k)})\le C_1^2\left(\frac{1}{\rho}\right)^{n-n_{k-1}}E_{n_{k-1}}^{K\backslash K_{0^{n_{k-1}-1}}}(u^{(k)})\le C_1^2\left(\frac{1}{\rho}\right)^{n-n_{k-1}}E_{n_{k-1}}(u^{(k)})\\
&< C_1^2\left(\frac{1}{\rho}\right)^{n-n_{k-1}}Cn_{k-1}\left(\frac{1}{\rho}\right)^{n_{k-1}}=C_1^2Cn_{k-1}\left(\frac{1}{\rho}\right)^n\text{ for any }n=n_{k-1},n_{k-1}+1,\ldots.
\end{align*}
Hence
\begin{align*}
E_n^{K\backslash K_{0^{n_{k-1}-1}}}(u^{(k)})&<C_1^2Cn_{k-1}\left(\frac{1}{\rho}\right)^n<\frac{1}{2C}n\left(\frac{1}{\rho}\right)^n\\
&\text{ for any }n=2C_1^2C^2n_{k-1}+1,2C_1^2C^2n_{k-1}+2,\ldots.
\end{align*}
In particular
$$E_{n_k}^{K\backslash K_{0^{n_{k-1}-1}}}(u^{(k)})<\frac{1}{2C}n_k\left(\frac{1}{\rho}\right)^{n_k}.$$
Take $\delta_1\in(0,+\infty)$ such that
$$E_{n_k}^{K\backslash K_{0^{n_k-1}}}(u^{(k+1)})=E_{n_{k}}^{K\backslash K_{0^{n_{k-1}-1}}}(u^{(k)})+\delta_1^2(8^{n_k-n_{k-1}}-1)E_1(u_1)=\frac{1}{2C}n_k\left(\frac{1}{\rho}\right)^{n_k},$$
then
$$\delta_1\le\sqrt{\frac{\frac{1}{2C}n_k\left(\frac{1}{\rho}\right)^{n_k}}{(8^{n_k-n_{k-1}}-1)E_1(u_1)}}\le\sqrt{\frac{n_k\left(\frac{1}{\rho}\right)^{n_k}}{14CE_1(u_1)}}.$$
Since $u^{(k+1)}|_{K\backslash K_{0^{n_{k-1}-1}}}=u^{(k)}|_{K\backslash K_{0^{n_{k-1}-1}}}$ is constructed by gluing the terms of the form $\delta u_1\circ f_w$ with $\delta\in(0,+\infty)$ and $w\in W_1\cup\ldots\cup W_{n_{k-1}-1}$, by Lemma \ref{lem_u1}, we have
\begin{align*}
&E_{n}^{K\backslash K_{0^{n_{k-1}-1}}}(u^{(k)})+\delta_1^2(8^{n_k-n_{k-1}}-1)E_{n-n_k+1}(u_1)\\
&\le C_1^2\left(\frac{1}{\rho}\right)^{n-n_k}\left(E_{n_k}^{K\backslash K_{0^{n_{k-1}-1}}}(u^{(k)})+\delta_1^2(8^{n_k-n_{k-1}}-1)E_1(u_1)\right)\\
&=\frac{C_1^2}{2C}n_k\left(\frac{1}{\rho}\right)^{n}\le\frac{C_1^2}{2C}n\left(\frac{1}{\rho}\right)^{n}<Cn\left(\frac{1}{\rho}\right)^{n}\text{ for any }n=n_k,n_k+1,\ldots.
\end{align*}
Take $\delta_2\in(0,\frac{1}{k^2})$ such that
\begin{align*}
E_n(u^{(k+1)})&=E_{n}^{K\backslash K_{0^{n_{k-1}-1}}}(u^{(k)})+\delta_1^2(8^{n_k-n_{k-1}}-1)E_{n-n_k+1}(u_1)+\delta_2^2E_{n-n_k+1}(u_2)\\
&<Cn\left(\frac{1}{\rho}\right)^n\text{ for any }n=n_k,n_k+1,\ldots,2C_1^2C^2n_k,2C_1^2C^2n_k+1.
\end{align*}

Therefore, we have
$$E_n(u^{(k+1)})<Cn\left(\frac{1}{\rho}\right)^n\text{ for any }n=1,\ldots,2C_1^2C^2n_k,2C_1^2C^2n_k+1,$$
$$E^{K\backslash K_{0^{n_k-1}}}_{n_i}(u^{(k+1)})\ge\frac{1}{2C}n_i\left(\frac{1}{\rho}\right)^{n_i}\text{ for any }i=1,\ldots,k.$$

By induction principle, we obtain $\myset{u^{(k)}}\subseteq C(K)$.

By construction, for any $k\ge2$, we have
$$u^{(k)}|_{K\backslash K_{0^{n_{k-1}-1}}}=u^{(k+1)}|_{K\backslash K_{0^{n_{k-1}-1}}},$$
$$0\le u^{(k)}|_{K_{0^{n_{k-1}-1}}}\le\frac{1}{(k-1)^2},$$
$$0\le u^{(k+1)}|_{K_{0^{n_{k-1}-1}}}\le\sqrt{\frac{n_k\left(\frac{1}{\rho}\right)^{n_k}}{14CE_1(u_1)}}+\frac{1}{k^2},$$
hence
$$\sup_{K}|u^{(k)}-u^{(k+1)}|\le\sqrt{\frac{n_k\left(\frac{1}{\rho}\right)^{n_k}}{14CE_1(u_1)}}+\frac{1}{k^2}+\frac{1}{(k-1)^2}.$$
Hence for any $k>l\ge2$, we have
$$\sup_K|u^{(k)}-u^{(l)}|\le\sum_{i=l}^{k-1}\sup_K|u^{(i)}-u^{(i+1)}|\le\sum_{i=l}^{k-1}\left(\sqrt{\frac{n_i\left(\frac{1}{\rho}\right)^{n_i}}{14CE_1(u_1)}}+\frac{1}{i^2}+\frac{1}{(i-1)^2}\right)\to0$$
as $k,l\to+\infty$. Hence $\myset{u^{(k)}}\subseteq C(K)$ converges uniformly to some function $u\in C(K)$.

For any fixed $n\ge1$, there exists $l\ge1$ such that $n\le 2C_1^2C^2n_{k}+1$ for any $k\ge l$, hence
$$E_n(u^{(k+1)})<Cn\left(\frac{1}{\rho}\right)^n\text{ for any }k\ge l.$$
Letting $k\to+\infty$, we have
$$E_n(u)=\lim_{k\to+\infty}E_n(u^{(k+1)})\le Cn\left(\frac{1}{\rho}\right)^n\text{ for any }n\ge1.$$

For any fixed $i\ge1$, for any $k\ge i$, we have
$$E_{n_i}(u^{(k+1)})\ge E_{n_i}^{K\backslash K_{0^{n_k-1}}}(u^{(k+1)})\ge\frac{1}{2C}n_i\left(\frac{1}{\rho}\right)^{n_i}.$$
Letting $k\to+\infty$, we have
$$E_{n_i}(u)=\lim_{k\to+\infty}E_{n_i}(u^{(k+1)})\ge\frac{1}{2C}n_i\left(\frac{1}{\rho}\right)^{n_i}\text{ for any }i\ge1.$$

Therefore, $u\in C(K)$ is our desired function.
\end{proof}

\section{Proof of Theorem \ref{thm_subordinate}}\label{sec_subordinate}

First, we list some results about closed form.

Let $(\calE,\calF)$ on $L^2(K;\nu)$ be a closed form that corresponds to a strongly continuous semi-group $\myset{T_t:t>0}$ on $L^2(K;\nu)$. By \cite[Section 1.3]{FOT11}, there exists a spectral family $\myset{E_\lambda:\lambda\in[0,+\infty)}$ such that
\begin{align*}
&\calE(u,u)=\int_{[0,+\infty)}\lambda\md(E_\lambda u,u),\\
&\calF=\myset{u\in L^2(K;\nu):\int_{[0,+\infty)}\lambda\md(E_\lambda u,u)<+\infty},
\end{align*}
and
$$T_t=\int_{[0,+\infty)}e^{-t\lambda}\md E_\lambda\text{ for any }t\in(0,+\infty).$$

For any $t\in(0,+\infty)$, for any $u\in L^2(K;\nu)$, let
$$\calE_{(t)}(u,u)=\frac{1}{t}(u-T_tu,u).$$

We have the following result.

\begin{mylem}\label{lem_Et}(\cite[Lemma 1.3.4]{FOT11})
For any $u\in L^2(K;\nu)$, we have $t\mapsto\calE_{(t)}(u,u)$ is monotone decreasing in $(0,+\infty)$ and
\begin{align*}
&\calE(u,u)=\lim_{t\downarrow0}\calE_{(t)}(u,u),\\
&\calF=\myset{u\in L^2(K;\nu):\lim_{t\downarrow0}\calE_{(t)}(u,u)<+\infty}.
\end{align*}
\end{mylem}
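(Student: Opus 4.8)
The plan is to reduce everything to the spectral representation stated just above the lemma and then to an elementary analysis of the one-variable function $g_t(\lambda)=(1-e^{-t\lambda})/t$. First I would observe that $\calE_{(t)}(u,u)=\frac1t(u-T_tu,u)$, and that by the spectral calculus $(u,u)=\int_{[0,+\infty)}\md(E_\lambda u,u)$ while $(T_tu,u)=\int_{[0,+\infty)}e^{-t\lambda}\md(E_\lambda u,u)$, so that
$$\calE_{(t)}(u,u)=\int_{[0,+\infty)}\frac{1-e^{-t\lambda}}{t}\md(E_\lambda u,u).$$
Here the underlying measure $\md(E_\lambda u,u)$ is a finite positive measure of total mass $\mynorm{u}^2$, because $(E_\lambda u,u)=\mynorm{E_\lambda u}^2$ is non-decreasing in $\lambda$: for $\mu>\lambda$ the operator $E_\mu-E_\lambda$ is again an orthogonal projection, so $(E_\mu u,u)-(E_\lambda u,u)=\mynorm{(E_\mu-E_\lambda)u}^2\ge0$. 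This positivity is exactly what will let me pass monotonicity and limits under the integral sign.

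Second, I would establish two elementary facts about $g_t(\lambda)=(1-e^{-t\lambda})/t$ for fixed $\lambda\ge0$: that $t\mapsto g_t(\lambda)$ is monotone decreasing on $(0,+\infty)$, and that $g_t(\lambda)\uparrow\lambda$ as $t\downarrow0$. For the first, differentiating and writing $s=t\lambda$, the sign of $\dd_t g_t(\lambda)$ is that of $e^{-s}(s+1)-1$, which vanishes at $s=0$ and has derivative $-se^{-s}\le0$, hence is $\le0$; so $g_t$ decreases in $t$. For the second, $g_t(\lambda)=\lambda-\tfrac{t\lambda^2}{2}+o(t)\to\lambda$ as $t\downarrow0$, and the convergence is \emph{increasing} precisely because $g_t$ is decreasing in $t$ (the case $\lambda=0$ being trivial since then $g_t\equiv0$).

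Third, I would assemble the conclusion. Since for each $\lambda$ the integrand $g_t(\lambda)$ decreases as $t$ increases and the measure is positive, $t\mapsto\calE_{(t)}(u,u)$ is monotone decreasing, which is the first assertion. For the limit, as $t\downarrow0$ the integrands increase pointwise to $\lambda$, so the monotone convergence theorem gives
$$\lim_{t\downarrow0}\calE_{(t)}(u,u)=\int_{[0,+\infty)}\lambda\md(E_\lambda u,u)=\calE(u,u),$$
the identity being valid in $[0,+\infty]$, so that the limit is finite exactly when the right-hand side is; comparing with the spectral description of $\calF$ then yields $\calF=\myset{u\in L^2(K;\nu):\lim_{t\downarrow0}\calE_{(t)}(u,u)<+\infty}$. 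The computations are routine, so the step I would treat as the genuine crux is the monotone convergence itself: one must argue by monotonicity rather than domination, so that the case $\calE(u,u)=+\infty$ is handled uniformly with the finite case. This is exactly why establishing the \emph{increasing} convergence $g_t(\lambda)\uparrow\lambda$ in the previous step — not merely pointwise convergence — is the part that carries the weight of the proof.
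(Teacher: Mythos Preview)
Your argument is correct and is precisely the standard spectral-theoretic proof: express $\calE_{(t)}(u,u)$ via the spectral family as $\int g_t(\lambda)\,\md(E_\lambda u,u)$ with $g_t(\lambda)=(1-e^{-t\lambda})/t$, check that $t\mapsto g_t(\lambda)$ is decreasing and that $g_t(\lambda)\uparrow\lambda$ as $t\downarrow0$, and apply monotone convergence. The paper itself does not give a proof of this lemma at all---it is quoted verbatim from \cite[Lemma~1.3.4]{FOT11}---so there is no paper proof to compare against; what you have written is essentially the argument one finds in that reference.
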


Second, we give some results about subordinated Hunt process.

Let $(\calE,\calF)$ be a regular Dirichlet form on $L^2(K;\nu)$ that corresponds to a Hunt process $\myset{X_t}$. For any $\delta\in(0,1)$, let $\myset{\xi^{(\delta)}_t}$ be the $\delta$-stable subordinator, that is, the one-dimensional L\'evy process whose Laplace transform is given by $\bbE e^{-s\xi^{(\delta)}_t}=e^{-ts^{\delta}}$, let $\eta^{(\delta)}_t$ be its one-dimensional distribution density. Assume that the processes $\myset{X_t}$ and $\myset{\xi_t^{(\delta)}}$ are independent, then the $\delta$-subordinated Hunt process$\myset{X^{(\delta)}_t}$ is given by $\myset{X_{\xi_t^{(\delta)}}}$. Let $\myset{T^{(\delta)}_t:t>0}$ on $L^2(K;\nu)$ be the strongly continuous Markovian semi-group and $(\calE^{(\delta)},\calF^{(\delta)})$ on $L^2(K;\nu)$ the regular Dirichlet form that correspond to the Hunt process $\myset{X^{(\delta)}_t}$, then
\begin{equation}\label{eqn_sub}
T^{(\delta)}_tu=\int_0^{+\infty}T_su\left(\eta^{(\delta)}_t(s)\md s\right)\text{ for any }t\in(0,+\infty),\text{ for any }u\in L^2(K;\nu),
\end{equation}
see \cite{Ber96,Sat99}.

We have the characterization of $\calE^{(\delta)}$ as follows.

\begin{myprop}\label{prop_pre}
For any $\delta\in(0,1)$, for any $u\in L^2(K;\nu)$, we have
$$\calE^{(\delta)}(u,u)=\int_{[0,+\infty)}\lambda^\delta\md(E_\lambda u,u)=\frac{\delta}{\Gamma(1-\delta)}\int_0^{+\infty}\frac{1}{s^{\delta}}\calE_{(s)}(u,u)\md s\left(\le+\infty\right).$$
\end{myprop}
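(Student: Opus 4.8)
The plan is to establish the two claimed identities separately, using the spectral representation throughout. For the first identity, $\calE^{(\delta)}(u,u)=\int_{[0,+\infty)}\lambda^\delta\,\md(E_\lambda u,u)$, I would start from the subordination formula \eqref{eqn_sub} and compute, for each $t>0$, the quadratic form $\calE^{(\delta)}_{(t)}(u,u)=\frac1t(u-T^{(\delta)}_tu,u)$. Plugging in $T^{(\delta)}_tu=\int_0^{+\infty}T_su\,\eta^{(\delta)}_t(s)\,\md s$ and using the spectral representation $T_s=\int_{[0,+\infty)}e^{-s\lambda}\,\md E_\lambda$ together with the defining property $\int_0^{+\infty}e^{-s\lambda}\eta^{(\delta)}_t(s)\,\md s=e^{-t\lambda^\delta}$ of the stable subordinator (its Laplace transform), one gets
$$\calE^{(\delta)}_{(t)}(u,u)=\int_{[0,+\infty)}\frac{1-e^{-t\lambda^\delta}}{t}\,\md(E_\lambda u,u).$$
Then I would apply Lemma \ref{lem_Et} to the semigroup $\{T^{(\delta)}_t\}$: $\calE^{(\delta)}(u,u)=\lim_{t\downarrow0}\calE^{(\delta)}_{(t)}(u,u)$. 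Since $(1-e^{-t\lambda^\delta})/t$ increases to $\lambda^\delta$ as $t\downarrow0$ and is nonnegative, the monotone convergence theorem (applied to the positive measure $\md(E_\lambda u,u)$) gives the first identity, with the value $+\infty$ allowed. Interchanging the $s$-integral and the spectral integral needs a word of justification (Tonelli, everything nonnegative), which is routine.

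For the second identity, $\int_{[0,+\infty)}\lambda^\delta\,\md(E_\lambda u,u)=\frac{\delta}{\Gamma(1-\delta)}\int_0^{+\infty}s^{-\delta}\calE_{(s)}(u,u)\,\md s$, the key analytic input is the elementary integral representation
$$\lambda^\delta=\frac{\delta}{\Gamma(1-\delta)}\int_0^{+\infty}\frac{1-e^{-s\lambda}}{s^{1+\delta}}\,\md s\qquad(\lambda\ge0,\ \delta\in(0,1)),$$
which is the scalar case of the Lévy–Khintchine formula for the stable subordinator and follows from a substitution reducing it to the Gamma function. Using the spectral formula $\calE_{(s)}(u,u)=\frac1s(u-T_su,u)=\int_{[0,+\infty)}\frac{1-e^{-s\lambda}}{s}\,\md(E_\lambda u,u)$, I would write $\frac{\delta}{\Gamma(1-\delta)}\int_0^{+\infty}s^{-\delta}\calE_{(s)}(u,u)\,\md s=\frac{\delta}{\Gamma(1-\delta)}\int_0^{+\infty}\int_{[0,+\infty)}\frac{1-e^{-s\lambda}}{s^{1+\delta}}\,\md(E_\lambda u,u)\,\md s$ and apply Tonelli's theorem (integrand nonnegative) to swap the order of integration, then invoke the scalar identity above to collapse the $s$-integral to $\lambda^\delta$. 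This yields the second identity.

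The main obstacle is not conceptual but bookkeeping: making sure every interchange of integrals is justified, and tracking that all quantities are genuinely nonnegative so that Tonelli/monotone convergence apply and the endpoint value $+\infty$ causes no trouble. Concretely, the one spot needing care is the passage from $\calE^{(\delta)}_{(t)}$ to its limit — one should note $\{T^{(\delta)}_t\}$ is indeed the strongly continuous semigroup associated with $(\calE^{(\delta)},\calF^{(\delta)})$, so that Lemma \ref{lem_Et} genuinely applies — and the verification of the scalar Lévy–Khintchine identity for $\lambda^\delta$, which can be cited or dispatched in one line via the substitution $r=s\lambda$ and $\int_0^{+\infty}(1-e^{-r})r^{-1-\delta}\,\md r=\Gamma(1-\delta)/\delta$. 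Everything else is a direct application of the spectral calculus already set up in the excerpt.
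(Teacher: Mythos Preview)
Your proposal is correct and follows essentially the same route as the paper: both arguments compute $\calE^{(\delta)}_{(t)}(u,u)$ via the subordination formula and the Laplace transform of $\eta^{(\delta)}_t$, pass to the limit $t\downarrow0$ by monotone convergence to obtain the spectral integral $\int\lambda^\delta\,\md(E_\lambda u,u)$, and then derive the second identity from the scalar formula $\int_0^{+\infty}(1-e^{-s\lambda})s^{-1-\delta}\,\md s=\Gamma(1-\delta)\lambda^\delta/\delta$ together with a Tonelli swap. The only cosmetic difference is that you spell out the nonnegativity justifications for Tonelli and monotone convergence a bit more explicitly than the paper does.
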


\begin{myrmks}
\begin{enumerate}[(1)]
\item For any $u\in L^2(K;\nu)$. $t^{-1}(u-T_t^{(\delta)}u,u)$ is non-negative finite for any $t\in(0,+\infty)$, monotone decreasing in $t\in(0,+\infty)$ and $\calE^{(\delta)}(u,u)$ is defined as its limit as $t\downarrow0$ by Lemma \ref{lem_Et} which is allowed to be $+\infty$. $\md(E_\lambda u,u)$ is an ordinary measure on $[0,+\infty)$ and $\calE_{(s)}(u,u)$ is non-negative finite for any $s\in(0,+\infty)$, hence the above two integrals are well-defined and allowed also to be $+\infty$.
\item \cite[Equation (3.5)]{Pie08} gave the above second equality only for any $u\in\calF$ where the condition $u\in\calF$ is intrinsically used to apply dominated convergence theorem.
\end{enumerate}
\end{myrmks}

\begin{proof}
For any $u\in L^2(K;\nu)$, by Lemma \ref{lem_Et} and Equation (\ref{eqn_sub}), we have
\begin{align*}
&\calE^{(\delta)}(u,u)=\lim_{t\downarrow0}\frac{1}{t}\left(u-T_t^{(\delta)}u,u\right)\\
&=\lim_{t\downarrow0}\frac{1}{t}\left((u,u)-\int_0^{+\infty}(T_su,u)\eta^{(\delta)}_t(s)\md s\right)\\
&=\lim_{t\downarrow0}\frac{1}{t}\int_0^{+\infty}\left(u-T_su,u\right)\eta_t^{(\delta)}(s)\md s\\
&=\lim_{t\downarrow0}\frac{1}{t}\int_0^{+\infty}\left(\int_{[0,+\infty)}(1-e^{-s\lambda})\md(E_\lambda u,u)\right)\eta_t^{(\delta)}(s)\md s\\
&=\lim_{t\downarrow0}\frac{1}{t}\int_{[0,+\infty)}\left(\int_0^{+\infty}(1-e^{-s\lambda})\eta_t^{(\delta)}(s)\md s\right)\md(E_\lambda u,u)\\
&=\lim_{t\downarrow0}\frac{1}{t}\int_{[0,+\infty)}\left(1-e^{-t\lambda^\delta}\right)\md(E_\lambda u,u).
\end{align*}
Since $t\mapsto\frac{1-e^{-t\lambda^\delta}}{t}$ is monotone decreasing in $(0,+\infty)$ for any $\lambda\in[0,+\infty)$, by monotone convergence theorem, we have
\begin{align*}
&\calE^{(\delta)}(u,u)=\lim_{t\downarrow0}\frac{1}{t}\int_{[0,+\infty)}\left(1-e^{-t\lambda^\delta}\right)\md(E_\lambda u,u)\\
&=\int_{[0,+\infty)}\lim_{t\downarrow0}\frac{1-e^{-t\lambda^\delta}}{t}\md(E_\lambda u,u)=\int_{[0,+\infty)}\lambda^\delta\md(E_\lambda u,u).
\end{align*}

Recall that for any $\delta\in(0,1)$, we have
$$\int_0^{+\infty}\frac{1-e^{-s}}{s^{1+\delta}}\md s=\frac{\Gamma(1-\delta)}{\delta}$$
which implies that for any $\lambda\in[0,+\infty)$, we have
$$\int_0^{+\infty}\frac{1-e^{-s\lambda}}{s^{1+\delta}}\md s=\frac{\Gamma(1-\delta)}{\delta}\lambda^{\delta}.$$

Hence
\begin{align*}
&\calE^{(\delta)}(u,u)=\int_{[0,+\infty)}\lambda^\delta\md(E_\lambda u,u)=\frac{\delta}{\Gamma(1-\delta)}\int_{[0,+\infty)}\left(\int_0^{+\infty}\frac{1-e^{-s\lambda}}{s^{1+\delta}}\md s\right)\md(E_\lambda u,u)\\
&=\frac{\delta}{\Gamma(1-\delta)}\int_0^{+\infty}\frac{1}{s^{1+\delta}}\left(\int_{[0,+\infty)}(1-e^{-s\lambda})\md(E_\lambda u,u)\right)\md s\\
&=\frac{\delta}{\Gamma(1-\delta)}\int_0^{+\infty}\frac{1}{s^{1+\delta}}(u-T_su,u)\md s=\frac{\delta}{\Gamma(1-\delta)}\int_0^{+\infty}\frac{1}{s^{\delta}}\calE_{(s)}(u,u)\md s.
\end{align*}
\end{proof}

We have some direct corollaries as follows.

\begin{mycor}\label{cor_domain}\hspace{0em}
\begin{enumerate}[(1)]
\item For any $\delta\in(0,1)$, we have
\begin{align*}
\calF^{(\delta)}&=\myset{u\in L^2(K;\nu):\int_{[0,+\infty)}\lambda^\delta\md(E_\lambda u,u)<+\infty}\\
&=\myset{u\in L^2(K;\nu):\int_{(1,+\infty)}\lambda^\delta\md(E_\lambda u,u)<+\infty}.
\end{align*}
\item For any $\delta_1,\delta_2\in(0,1)$ with $\delta_1<\delta_2$, we have $\calF^{(\delta_1)}\supseteq\calF^{(\delta_2)}\supseteq\calF$.
\end{enumerate}
\end{mycor}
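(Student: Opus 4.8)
The plan is to read off both statements from Proposition~\ref{prop_pre} together with the spectral representation of the domain of a closed form. First I would apply Lemma~\ref{lem_Et} to the closed form $(\calE^{(\delta)},\calF^{(\delta)})$ and its strongly continuous semi-group $\myset{T^{(\delta)}_t}$: this gives $\calF^{(\delta)}=\myset{u\in L^2(K;\nu):\calE^{(\delta)}(u,u)<+\infty}$, where $\calE^{(\delta)}(u,u)$ denotes the (possibly infinite) monotone limit $\lim_{t\downarrow0}t^{-1}(u-T^{(\delta)}_tu,u)$. Feeding in the first identity of Proposition~\ref{prop_pre}, namely $\calE^{(\delta)}(u,u)=\int_{[0,+\infty)}\lambda^\delta\,\md(E_\lambda u,u)$ for every $u\in L^2(K;\nu)$, yields the first equality in part~(1) immediately.

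For the second description of $\calF^{(\delta)}$ I would split the spectral integral at $\lambda=1$. Since $\md(E_\lambda u,u)$ is a finite Borel measure on $[0,+\infty)$ of total mass $(u,u)=\mynorm{u}_{L^2(K;\nu)}^2$, and since $\lambda^\delta\le1$ on $[0,1]$ for $\delta\in(0,1)$, the contribution $\int_{[0,1]}\lambda^\delta\,\md(E_\lambda u,u)$ is dominated by $\mynorm{u}_{L^2(K;\nu)}^2<+\infty$ for every $u\in L^2(K;\nu)$. Hence finiteness of $\int_{[0,+\infty)}\lambda^\delta\,\md(E_\lambda u,u)$ is equivalent to finiteness of $\int_{(1,+\infty)}\lambda^\delta\,\md(E_\lambda u,u)$, which is the claimed tail characterization. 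Part~(2) then follows by monotonicity: for $\delta_1<\delta_2$ in $(0,1)$ one has $\lambda^{\delta_1}\le\lambda^{\delta_2}$ for all $\lambda>1$, so $\int_{(1,+\infty)}\lambda^{\delta_1}\,\md(E_\lambda u,u)\le\int_{(1,+\infty)}\lambda^{\delta_2}\,\md(E_\lambda u,u)$ and therefore $\calF^{(\delta_2)}\subseteq\calF^{(\delta_1)}$; similarly $\lambda^\delta\le\lambda$ for $\lambda>1$ together with $\int_{[0,+\infty)}\lambda\,\md(E_\lambda u,u)<+\infty$ for $u\in\calF$ gives $\calF\subseteq\calF^{(\delta)}$.

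This is a bookkeeping consequence of Proposition~\ref{prop_pre}, so I do not anticipate a genuine obstacle; the only point deserving attention is that all the forms and integrals involved may take the value $+\infty$. This causes no trouble because, as noted in the remark following Proposition~\ref{prop_pre}, $t^{-1}(u-T^{(\delta)}_tu,u)$ is finite and monotone decreasing in $t\in(0,+\infty)$ for every $u\in L^2(K;\nu)$, so its limit is a well-defined element of $[0,+\infty]$, and $\md(E_\lambda u,u)$ is an ordinary finite measure; thus each integral above is unambiguously defined in $[0,+\infty]$ and the stated equivalences between their finiteness are genuine.
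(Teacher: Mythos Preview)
Your proposal is correct and follows essentially the same route as the paper: the first equality comes directly from Proposition~\ref{prop_pre} (together with the paper's convention that $\calE^{(\delta)}(u,u)=+\infty$ outside $\calF^{(\delta)}$), the second from bounding $\int_{[0,1]}\lambda^\delta\,\md(E_\lambda u,u)\le(u,u)$, and part~(2) from the pointwise inequality $\lambda^{\delta_1}\le\lambda^{\delta_2}\le\lambda$ on $(1,+\infty)$. The paper's proof is terser---it simply says part~(2) ``follows easily from (1)''---but your spelled-out monotonicity argument is exactly what is intended.
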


\begin{proof}
(1) The first equality follows directly from Proposition \ref{prop_pre}. Since for any $u\in L^2(K;\nu)$, we have
$$\int_{[0,1]}\lambda^{\delta}\md(E_\lambda u,u)\le\int_{[0,1]}\md(E_\lambda u,u)\le\int_{[0,+\infty)}\md(E_\lambda u,u)=(u,u)<+\infty.$$
Hence
$$\int_{[0,+\infty)}\lambda^\delta\md(E_\lambda u,u)<+\infty$$
if and only if
$$\int_{(1,+\infty)}\lambda^\delta\md(E_\lambda u,u)<+\infty.$$
Hence we have the second equality.

(2) It follows easily from (1).
\end{proof}

\begin{mycor}\label{cor_regular}
For any $\delta\in(0,1)$.
\begin{enumerate}[(1)]
\item $\calF$ is $(\calE^{(\delta)}(\cdot,\cdot)+(\cdot,\cdot))$-dense in $\calF^{(\delta)}$.
\item Any core of $(\calE,\calF)$ on $L^2(K;\nu)$ is a core of $(\calE^{(\delta)},\calF^{(\delta)})$ on $L^2(K;\nu)$.
\end{enumerate}
\end{mycor}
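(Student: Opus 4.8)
The plan is to prove (1) first by spectral truncation, and then to deduce (2) from (1) together with a comparison of the two form norms on $\calF$. Throughout, $\{E_\lambda\}$ denotes the spectral family attached to $(\calE,\calF)$ as in Lemma \ref{lem_Et}, and I will write $\calE_1:=\calE(\cdot,\cdot)+(\cdot,\cdot)$ and use Corollary \ref{cor_domain}, so that $\calE^{(\delta)}(v,v)+(v,v)=\int_{[0,+\infty)}(\lambda^\delta+1)\,\md(E_\lambda v,v)$ for every $v\in L^2(K;\nu)$, this quantity being finite exactly when $v\in\calF^{(\delta)}$.

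For (1), fix $u\in\calF^{(\delta)}$ and set $u_N:=E_Nu$ for $N\ge 1$. Since $\int_{[0,N]}\lambda\,\md(E_\lambda u,u)\le N(u,u)<+\infty$, we have $u_N\in\calF$. Because $E_N$ is a spectral projection, $u-u_N=(I-E_N)u$ satisfies $\md(E_\lambda(u-u_N),u-u_N)=\mathbf 1_{(N,+\infty)}(\lambda)\,\md(E_\lambda u,u)$, so
$$\calE^{(\delta)}(u-u_N,u-u_N)+(u-u_N,u-u_N)=\int_{(N,+\infty)}(\lambda^\delta+1)\,\md(E_\lambda u,u).$$
This is the tail of the \emph{finite} measure $(\lambda^\delta+1)\,\md(E_\lambda u,u)$ (finite precisely because $u\in\calF^{(\delta)}$), hence it tends to $0$ as $N\to+\infty$. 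Thus every element of $\calF^{(\delta)}$ is approximated in the $(\calE^{(\delta)}(\cdot,\cdot)+(\cdot,\cdot))$-norm by elements of $\calF$, which is (1).

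For (2), let $\calC$ be a core of $(\calE,\calF)$, i.e.\ $\calC\subseteq\calF\cap C_c(K)$ is $\calE_1$-dense in $\calF$ and uniformly dense in $C_c(K)$. By Corollary \ref{cor_domain}(2) we have $\calF\subseteq\calF^{(\delta)}$, so $\calC\subseteq\calF^{(\delta)}\cap C_c(K)$ and the uniform density in $C_c(K)$ is automatically inherited; it remains to check $(\calE^{(\delta)}(\cdot,\cdot)+(\cdot,\cdot))$-density of $\calC$ in $\calF^{(\delta)}$. The elementary bound $\lambda^\delta+1\le 2(\lambda+1)$ for $\lambda\ge 0$ yields, via the spectral representations,
$$\calE^{(\delta)}(v,v)+(v,v)=\int_{[0,+\infty)}(\lambda^\delta+1)\,\md(E_\lambda v,v)\le 2\int_{[0,+\infty)}(\lambda+1)\,\md(E_\lambda v,v)=2\big(\calE(v,v)+(v,v)\big)$$
for every $v\in\calF$. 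Hence $\calE_1$-convergence inside $\calF$ implies $(\calE^{(\delta)}(\cdot,\cdot)+(\cdot,\cdot))$-convergence, so $\calC$ is $(\calE^{(\delta)}(\cdot,\cdot)+(\cdot,\cdot))$-dense in $\calF$; combining this with part (1) and the triangle inequality for the $(\calE^{(\delta)}(\cdot,\cdot)+(\cdot,\cdot))$-norm shows $\calC$ is $(\calE^{(\delta)}(\cdot,\cdot)+(\cdot,\cdot))$-dense in $\calF^{(\delta)}$. Therefore $\calC$ is a core of $(\calE^{(\delta)},\calF^{(\delta)})$.

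There is no genuinely hard step: part (1) is a routine spectral-truncation argument once Corollary \ref{cor_domain} is available, and part (2) reduces to the norm comparison displayed above. The only point needing a little care is the bookkeeping in the definition of a core — confirming that $\calC$ lands in $\calF^{(\delta)}\cap C_c(K)$ and that the uniform density transfers for free — which is immediate from the inclusion $\calF\subseteq\calF^{(\delta)}$.
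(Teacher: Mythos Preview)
Your proof is correct. Part (2) is essentially identical to the paper's argument: both establish $\calE^{(\delta)}(v,v)+(v,v)\le 2\bigl(\calE(v,v)+(v,v)\bigr)$ on $\calF$ via the elementary bound $\lambda^\delta\le 1\vee\lambda$, then combine this with (1).

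For part (1), however, your route differs from the paper's. You approximate $u\in\calF^{(\delta)}$ by the spectral truncations $u_N=E_Nu\in\calF$ and read off convergence directly from the tail integral $\int_{(N,+\infty)}(\lambda^\delta+1)\,\md(E_\lambda u,u)\to 0$. The paper instead shows that the subordinated semigroup $T_t^{(\delta)}$ maps \emph{all} of $L^2(K;\nu)$ into $\calF$ (because $\lambda\mapsto\lambda e^{-2t\lambda^\delta}$ is bounded), and then invokes the general fact from \cite[Lemma 1.3.3 (iii)]{FOT11} that $T_t^{(\delta)}u\to u$ in $(\calE^{(\delta)}(\cdot,\cdot)+(\cdot,\cdot))$-norm for $u\in\calF^{(\delta)}$. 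Your argument is shorter and fully self-contained once Corollary \ref{cor_domain} is in hand; the paper's argument yields, as a byproduct, the stronger smoothing statement $T_t^{(\delta)}(L^2)\subseteq\calF$, which is of independent interest but not needed for the corollary itself.
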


\begin{myrmk}
If $(\calE^{(\delta)},\calF^{(\delta)})$ on $L^2(K;\nu)$ is defined only as the Dirichlet form corresponding to the strongly continuous Markovian semi-group $\myset{T^{(\delta)}_t:t>0}$ on $L^2(K;\nu)$ which is given by Equation (\ref{eqn_sub}), then the regular property of $(\calE^{(\delta)},\calF^{(\delta)})$ on $L^2(K;\nu)$ follows also from the regular property of $(\calE,\calF)$ on $L^2(K;\nu)$ and the above result.
\end{myrmk}

\begin{proof}
(1) For any $t\in(0,+\infty)$, for any $u\in L^2(K;\nu)$, we claim that $T_t^{(\delta)}u\in\calF$. We only need to show that
$$\int_{[0,+\infty)}\lambda\md(E_\lambda T_t^{(\delta)}u,T_t^{(\delta)}u)<+\infty.$$
Indeed
\begin{align*}
&\int_{[0,+\infty)}\lambda\md(E_\lambda T_t^{(\delta)}u,T_t^{(\delta)}u)=\int_{[0,+\infty)}\int_0^{+\infty}\int_0^{+\infty}\lambda\eta^{(\delta)}_t(r)\eta^{(\delta)}_t(s)\md(E_\lambda T_ru,T_su)\md r\md s\\
&=\int_{[0,+\infty)}\lambda\left(\int_0^{+\infty}\int_0^{+\infty}e^{-r\lambda}e^{-s\lambda}\eta^{(\delta)}_t(r)\eta_t^{(\delta)}(s)\md r\md s\right)\md(E_\lambda u,u)\\
&=\int_{[0,+\infty)}\lambda e^{-2t\lambda^\delta}\md(E_\lambda u,u).
\end{align*}
Since $\lambda\mapsto\lambda e^{-2t\lambda^\delta}$ is continuous on $[0,+\infty)$ and $\lim_{\lambda\to+\infty}\lambda e^{-2t\lambda^\delta}=0$, there exists some positive constant $C$ such that
$$0\le\lambda e^{-2t\lambda^\delta}\le C\text{ for any }\lambda\in[0,+\infty).$$
Hence
$$\int_{[0,+\infty)}\lambda\md(E_\lambda T_t^{(\delta)}u,T_t^{(\delta)}u)=\int_{[0,+\infty)}\lambda e^{-2t\lambda^\delta}\md(E_\lambda u,u)\le C(u,u)<+\infty.$$

For any $u\in\calF^{(\delta)}$, by \cite[Lemma 1.3.3 (\rmnum{3})]{FOT11}, we have $T_t^{(\delta)}u\in\calF$ is $(\calE^{(\delta)}(\cdot,\cdot)+(\cdot,\cdot))$-convergent to $u$ as $t\downarrow0$. Hence $\calF$ is $(\calE^{(\delta)}(\cdot,\cdot)+(\cdot,\cdot))$-dense in $\calF^{(\delta)}$.

(2) For any $u\in\calF$, by Proposition \ref{prop_pre}, we have
\begin{align*}
\calE^{(\delta)}(u,u)&=\int_{[0,1]}\lambda^\delta\md(E_\lambda u,u)+\int_{(1,+\infty)}\lambda^\delta\md(E_\lambda u,u)\\
&\le\int_{[0,1]}\md(E_\lambda u,u)+\int_{(1,+\infty)}\lambda\md(E_\lambda u,u)\\
&\le\int_{[0,+\infty)}\md(E_\lambda u,u)+\int_{[0,+\infty)}\lambda\md(E_\lambda u,u)\\
&=\calE(u,u)+(u,u).
\end{align*}
Hence
$$\calE^{(\delta)}(u,u)+(u,u)\le 2\left(\calE(u,u)+(u,u)\right)\text{ for any }u\in\calF.$$

Let $\calC$ be a core of $(\calE,\calF)$ on $L^2(K;\nu)$, that is, $\calC$ is $(\calE(\cdot,\cdot)+(\cdot,\cdot))$-dense in $\calF$ and uniformly dense in $C_c(K)$. We only need to show that $\calC$ is $(\calE^{(\delta)}(\cdot,\cdot)+(\cdot,\cdot))$-dense in $\calF^{(\delta)}$. Indeed, by the above inequality, we have $\calC$ is $(\calE^{(\delta)}(\cdot,\cdot)+(\cdot,\cdot))$-dense in $\calF$. Since $\calF$ is $(\calE^{(\delta)}(\cdot,\cdot)+(\cdot,\cdot))$-dense in $\calF^{(\delta)}$ which is (1), we have $\calC$ is $(\calE^{(\delta)}(\cdot,\cdot)+(\cdot,\cdot))$-dense in $\calF^{(\delta)}$.
\end{proof}

\begin{mycor}\label{cor_BD}
Let $p_t(x,\md y)$ be the transition density of the regular Dirichlet form $(\calE,\calF)$ on $L^2(K;\nu)$. Then for any $\delta\in(0,1)$, we have
$$\calE^{(\delta)}(u,u)=\frac{1}{2}\int_K\int_K(u(x)-u(y))^2J^{(\delta)}(\md x\md y)+\int_Ku(x)^2k^{(\delta)}(\md x)\text{ for any }u\in L^2(K;\nu),$$
where
\begin{align*}
J^{(\delta)}(\md x\md y)&=\frac{\delta}{\Gamma(1-\delta)}\int_0^{+\infty}\frac{1}{s^{1+\delta}}p_s(x,\md y)\nu(\md x)\md s,\\
k^{(\delta)}(\md x)&=\frac{\delta}{\Gamma(1-\delta)}\int_0^{+\infty}\frac{1}{s^{1+\delta}}\left(1-\int_Kp_s(x,\md y)\right)\nu(\md x)\md s.
\end{align*}
\end{mycor}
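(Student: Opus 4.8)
The plan is to unfold the second expression for $\calE^{(\delta)}$ supplied by Proposition \ref{prop_pre}, namely
$$\calE^{(\delta)}(u,u)=\frac{\delta}{\Gamma(1-\delta)}\int_0^{+\infty}\frac{1}{s^{1+\delta}}(u-T_su,u)\,\md s,$$
and to rewrite, for each fixed $s\in(0,+\infty)$, the quantity $(u-T_su,u)$ through the transition kernel $p_s(x,\md y)$. First I would write $(T_su,u)=\int_K\int_Ku(x)u(y)p_s(x,\md y)\nu(\md x)$ and split $(u,u)=\int_Ku(x)^2\nu(\md x)$ into $\int_Ku(x)^2\big(\int_Kp_s(x,\md y)\big)\nu(\md x)+\int_Ku(x)^2\big(1-\int_Kp_s(x,\md y)\big)\nu(\md x)$, the second summand being the contribution of the killing. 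Using that $(\calE,\calF)$ is symmetric, so that $\mu_s(\md x\,\md y):=p_s(x,\md y)\nu(\md x)$ is a symmetric measure on $K\times K$, I would symmetrize the first summand, $\int_Ku(x)^2\big(\int_Kp_s(x,\md y)\big)\nu(\md x)=\frac12\int_K\int_K(u(x)^2+u(y)^2)p_s(x,\md y)\nu(\md x)$, and combine it with $-\int_K\int_Ku(x)u(y)p_s(x,\md y)\nu(\md x)$ to obtain, for every $s>0$,
$$(u-T_su,u)=\frac12\int_K\int_K(u(x)-u(y))^2p_s(x,\md y)\nu(\md x)+\int_Ku(x)^2\Big(1-\int_Kp_s(x,\md y)\Big)\nu(\md x).$$

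Next I would multiply this identity by $\frac{\delta}{\Gamma(1-\delta)}s^{-(1+\delta)}$ and integrate over $s\in(0,+\infty)$. All integrands are non-negative — in particular $1-\int_Kp_s(x,\md y)\ge0$ since $\{T_s\}$ is sub-Markovian — so Tonelli's theorem permits exchanging the $s$-integration with the $\nu$-integrations. The $(u(x)-u(y))^2$-term then produces $\frac12\int_K\int_K(u(x)-u(y))^2J^{(\delta)}(\md x\,\md y)$ with $J^{(\delta)}$ as defined, and the killing term produces $\int_Ku(x)^2k^{(\delta)}(\md x)$ with $k^{(\delta)}$ as defined, which is exactly the claimed Beurling--Deny type representation; the equality holds in $[0,+\infty]$, consistent with the convention $\calE^{(\delta)}(u,u)=+\infty$ for $u\notin\calF^{(\delta)}$.

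The only points that need a word of care are the well-definedness of the measures $J^{(\delta)}(\md x\,\md y)$ and $k^{(\delta)}(\md x)$, which amounts to joint measurability of $(s,x,B)\mapsto p_s(x,B)$ and follows from the strong continuity of $\{T_s\}$ together with a monotone-class argument, and the symmetry of $\mu_s(\md x\,\md y)=p_s(x,\md y)\nu(\md x)$, which is the standard fact that self-adjointness of $T_s$ on $L^2(K;\nu)$ transfers to the transition kernel (see \cite{FOT11}). I do not expect any substantive obstacle beyond these routine measure-theoretic verifications; the heart of the argument is the pointwise-in-$s$ symmetrization identity above followed by Tonelli.
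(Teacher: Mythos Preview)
Your proposal is correct and follows exactly the same route as the paper: the paper's proof simply states the identity
\[(u-T_su,u)=\frac{1}{2}\int_K\int_K(u(x)-u(y))^2p_s(x,\md y)\nu(\md x)+\int_Ku(x)^2\Big(1-\int_Kp_s(x,\md y)\Big)\nu(\md x)\]
for each $s>0$ and then appeals to Proposition~\ref{prop_pre}. You supply more justification (the symmetrization via symmetry of $p_s(x,\md y)\nu(\md x)$, the sub-Markov inequality $\int_K p_s(x,\md y)\le 1$, and the use of Tonelli) than the paper does, but the argument is the same.
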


\begin{myrmk}
The above result is indeed the Beurling-Deny decomposition of the regular Dirichlet form $(\calE^{(\delta)},\calF^{(\delta)})$ on $L^2(K;\nu)$ which has only jumping part and killing part, see \cite[Theorem 3.2.1, Lemma 4.5.4]{FOT11}. Hence $(\calE^{(\delta)},\calF^{(\delta)})$ on $L^2(K;\nu)$ is always non-local.
\end{myrmk}

\begin{proof}
For any $u\in L^2(K;\nu)$, for any $s\in(0,+\infty)$, we have
$$(u-T_su,u)=\frac{1}{2}\int_K\int_K(u(x)-u(y))^2p_s(x,\md y)\nu(\md x)+\int_Ku(x)^2\left(1-\int_Kp_s(x,\md y)\right)\nu(\md x).$$
Then the result follows directly from Proposition \ref{prop_pre}.
\end{proof}

Third, we give the proof of Theorem \ref{thm_subordinate} as follows.

\begin{proof}[Proof of Theorem \ref{thm_subordinate}]
It follows directly from Corollary \ref{cor_domain} (2) that $\calF\subseteq\cap_{\delta\in(0,1)}\calF^{(\delta)}$. If $A$ is bounded, then 
$$L^2(K;\nu)=\calD(A)\subseteq\calD(\sqrt{-A})=\calF\subseteq\bigcap_{\delta\in(0,1)}\calF^{(\delta)}\subseteq L^2(K;\nu),$$
hence we have ``$=$". We only need to show that if $A$ is unbounded, then ``$\subsetneqq$" holds.

Let $E$ be the spectral measure corresponding to the spectral family $\myset{E_\lambda:\lambda\in[0,+\infty)}$, that is,
$$E((\lambda_1,\lambda_2])=E_{\lambda_2}-E_{\lambda_1}\text{ for any }\lambda_1,\lambda_2\in[0,+\infty)\text{ with }\lambda_1<\lambda_2.$$
Then $\myset{E((2^k,2^{k+1}])}_{k\ge0}$ is a sequence of orthogonal projections on $L^2(K;\nu)$ satisfying
$$E((2^k,2^{k+1}])E((2^l,2^{l+1}])=0\text{ for any }k,l\ge0\text{ with }k\ne l.$$
Let
$$I=\myset{k\ge0:E((2^k,2^{k+1}])\ne0}.$$
Since $A$ is unbounded, the spectrum $\sigma(-A)\subseteq[0,+\infty)$ is unbounded, $\#I=+\infty$.

For any $k\in I$, there exists $u_k\in E((2^k,2^{k+1}])(L^2(K;\nu))$ with $\mynorm{u_k}^2_{L^2(K;\nu)}=2^{-(k+1)}$. For any $k\not\in I$, let $u_k=0$. Then $(u_k,u_l)=0$ for any $k,l\ge0$ with $k\ne l$.

Let $u=\sum_{k=0}^\infty u_k$. Then $u\in L^2(K;\nu)$ and
$$\mynorm{u}_{L^2(K;\nu)}^2=\sum_{k=0}^{\infty}\mynorm{u_k}_{L^2(K;\nu)}^2\le\sum_{k=0}^{\infty}\frac{1}{2^{k+1}}=1.$$

For any $k\ge0$, we have $E((2^k,2^{k+1}])u=u_k$. Hence
\begin{align*}
&\int_{(1,+\infty)}\lambda\md(E_\lambda u,u)=\sum_{k=0}^\infty\int_{(2^k,2^{k+1}]}\lambda\md(E_\lambda u,u)\ge\sum_{k\in I}\int_{(2^k,2^{k+1}]}\lambda\md(E_\lambda u,u)\\
&\ge\sum_{k\in I}\int_{(2^k,2^{k+1}]}2^k\md(E_\lambda u,u)=\sum_{k\in I}2^k(E((2^k,2^{k+1}])u,u)=\sum_{k\in I}2^k(u_k,u)\\
&=\sum_{k\in I}2^k\mynorm{u_k}_{L^2(K;\nu)}^2=\sum_{k\in I}2^{k}\frac{1}{2^{k+1}}=\frac{1}{2}\#I=+\infty,
\end{align*}
hence $u\not\in\calF$.

For any $\delta\in(0,1)$, we have
\begin{align*}
&\int_{(1,+\infty)}\lambda^{\delta}\md(E_\lambda u,u)=\sum_{k=0}^\infty\int_{(2^k,2^{k+1}]}\lambda^\delta\md(E_\lambda u,u)\le\sum_{k=0}^\infty\int_{(2^k,2^{k+1}]}2^{\delta(k+1)}\md(E_\lambda u,u)\\
&=\sum_{k=0}^\infty2^{\delta(k+1)}(E((2^k,2^{k+1}])u,u)=\sum_{k=0}^\infty2^{\delta(k+1)}(u_k,u)=\sum_{k=0}^\infty2^{\delta(k+1)}\mynorm{u_k}^2_{L^2(K;\nu)}\\
&\le\sum_{k=0}^\infty2^{\delta(k+1)}\frac{1}{2^{k+1}}=\sum_{k=0}^\infty\frac{1}{2^{(1-\delta)(k+1)}}<+\infty,
\end{align*}
hence by Corollary \ref{cor_domain} (1), we have $u\in\calF^{(\delta)}$ for any $\delta\in(0,1)$.

Therefore, we have $u\in\cap_{\delta\in(0,1)}\calF^{(\delta)}\backslash\calF$.
\end{proof}

\section{Proof of Proposition \ref{prop_hk}}\label{sec_hk}

By Theorem \ref{thm_subordinate}, we only need to show that the generator $A$ of $(\calE,\calF)$ on $L^2(K;\nu)$ is unbounded and that $\calF^{(\delta)}=\calF_{\delta\beta_0}$ for any $\delta\in(0,1)$.

First, we have the following result.

\begin{mylem}\label{lem_unbounded}
Let $(K,d,\nu)$ be an $\alpha$-regular metric measure space. Let $(\calE,\calF)$ be a conservative regular Dirichlet form on $L^2(K;\nu)$ with a heat kernel $p_t(x,y)$ satisfying
\begin{equation}\label{eqn_lower}
p_t(x,y)\ge\frac{C_1}{t^{\alpha/\beta_0}}\Phi\left(C_2\frac{d(x,y)}{t^{1/\beta_0}}\right)
\end{equation}
for any $x,y\in K$, for any $t\in(0,\mathrm{diam}(K)^{\beta_0})$, here $\mathrm{diam}(K)=\sup\myset{d(x,y):x,y\in K}$ is infinite if $K$ is unbounded and is finite if $K$ is bounded, where $\beta_0\in(0,+\infty)$ is some parameter, $C_1,C_2$ are some positive constants and $\Phi:(0,+\infty)\to(0,+\infty)$ is some monotone decreasing function. Then the generator $A$ of $(\calE,\calF)$ on $L^2(K;\nu)$ is unbounded.
\end{mylem}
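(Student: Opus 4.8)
The plan is to argue by contradiction. Suppose $A$ were bounded, say $\Lambda:=\mynorm{A}<+\infty$. Then the spectral family $\myset{E_\lambda}$ of $A$ is supported on $[0,\Lambda]$, so $\calF=L^2(K;\nu)$ and $\calE(u,u)=\int_{[0,\Lambda]}\lambda\,\md(E_\lambda u,u)\le\Lambda\mynorm{u}_{L^2(K;\nu)}^2$ for \emph{every} $u\in L^2(K;\nu)$; by Lemma \ref{lem_Et} we also have $\calE(u,u)\ge\calE_{(t)}(u,u)$ for every $t>0$. Hence it suffices to exhibit a family $\myset{u_\rho}_{\rho>0}\subseteq L^2(K;\nu)$ and times $t(\rho)>0$ with $\calE_{(t(\rho))}(u_\rho,u_\rho)/\mynorm{u_\rho}_{L^2(K;\nu)}^2\to+\infty$ as $\rho\downarrow0$, which contradicts the bound above. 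Because $(\calE,\calF)$ is conservative, $\int_Kp_t(x,y)\nu(\md y)=1$, and therefore
$$\calE_{(t)}(u,u)=\frac1t(u-T_tu,u)=\frac1{2t}\int_K\int_Kp_t(x,y)(u(x)-u(y))^2\,\nu(\md x)\nu(\md y),$$
which reduces the task to a lower bound for this jump-type quantity, to be extracted from the heat kernel lower bound (\ref{eqn_lower}).

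The test functions will be normalized indicators of small balls. Fix $x_0\in K$, fix a large dilation constant $\Lambda_0$ to be chosen below, and for small $\rho>0$ put $u_\rho=1_{B(x_0,\rho)}$ and $t(\rho)=\big(C_2(\Lambda_0+1)\rho\big)^{\beta_0}$; note $t(\rho)<\mathrm{diam}(K)^{\beta_0}$ once $\rho$ is small. By $\alpha$-regularity, $\mynorm{u_\rho}_{L^2(K;\nu)}^2=\nu(B(x_0,\rho))\asymp\rho^\alpha$. For $x\in B(x_0,\rho)$ and $y$ in the annulus $A_\rho:=B(x_0,\Lambda_0\rho)\setminus B(x_0,\rho)$ one has $0<d(x,y)<(\Lambda_0+1)\rho$, hence $C_2d(x,y)/t(\rho)^{1/\beta_0}<1$, so by monotonicity of $\Phi$ and (\ref{eqn_lower}),
$$p_{t(\rho)}(x,y)\ge\frac{C_1}{t(\rho)^{\alpha/\beta_0}}\Phi(1)\asymp\rho^{-\alpha}.$$
Since $(u_\rho(x)-u_\rho(y))^2=1$ on $B(x_0,\rho)\times A_\rho$ and $A_\rho\subseteq K\setminus B(x_0,\rho)$, the identity above gives
$$\calE_{(t(\rho))}(u_\rho,u_\rho)\ \ge\ \frac1{t(\rho)}\int_{B(x_0,\rho)}\int_{A_\rho}p_{t(\rho)}(x,y)\,\nu(\md y)\nu(\md x)\ \gtrsim\ \rho^{-\beta_0}\cdot\rho^{-\alpha}\cdot\rho^{\alpha}\cdot\nu(A_\rho)\ =\ \rho^{-\beta_0}\,\nu(A_\rho).$$
If we know $\nu(A_\rho)\gtrsim\rho^\alpha$, the right-hand side is $\gtrsim\rho^{\alpha-\beta_0}$ while $\mynorm{u_\rho}_{L^2(K;\nu)}^2\asymp\rho^\alpha$, so the ratio is $\gtrsim\rho^{-\beta_0}\to+\infty$, the desired contradiction.

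The only delicate point — and the step I expect to need the most care — is precisely the annulus estimate $\nu(A_\rho)\gtrsim\rho^\alpha$, since a single dilation by $2$ need not suffice when the $\alpha$-regularity constants are far apart. It is handled by fixing $\Lambda_0$ once and for all with $c_1\Lambda_0^\alpha>2c_2$, where $c_1r^\alpha\le\nu(B(x,r))\le c_2r^\alpha$ for $0<r\le\mathrm{diam}(K)$; then $\nu(A_\rho)=\nu(B(x_0,\Lambda_0\rho))-\nu(B(x_0,\rho))\ge\frac12 c_1\Lambda_0^\alpha\rho^\alpha$ for all $\rho$ with $\Lambda_0\rho\le\mathrm{diam}(K)$. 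Here one uses $\alpha>0$ (an $\alpha$-regular space with $\alpha=0$ is essentially finite, and then $A$ is automatically bounded, so that case is degenerate). With $\Lambda_0$ fixed in this way, all constants in the chain of estimates are absolute and the estimates hold for every sufficiently small $\rho>0$, which completes the contradiction and hence the proof.
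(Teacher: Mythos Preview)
Your proof is correct and follows essentially the same approach as the paper: argue by contradiction, use conservativeness to write $\calE_{(t)}(u,u)$ as a jump-type double integral, test against indicators of small balls, and extract the blow-up $\rho^{-\beta_0}$ from the heat-kernel lower bound together with an annulus volume estimate. The only notable difference is in the annulus step: the paper shows $B(x_0,cr)\setminus B(x_0,r)\ne\emptyset$, then picks $z$ in a larger annulus and uses $B(z,r)\subseteq B(x_0,c^3r)\setminus B(x_0,r)$ to get $\nu(\text{annulus})\gtrsim r^\alpha$, whereas you obtain the same bound more directly by choosing $\Lambda_0$ with $c_1\Lambda_0^\alpha>2c_2$ and subtracting the two volume estimates; your version is a bit cleaner.
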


\begin{myrmk}
The boundedness of the generator is sensitive to the small scale behaviours of the space and the heat kernel. For example, on $\mathbb{Z}$, the generator of the standard random walk is bounded with spectrum $[-2,0]$, but on $\R$, the generator of the standard Brownian motion is unbounded with spectrum $(-\infty,0]$.
\end{myrmk}

\begin{proof}
Since $(\calE,\calF)$ on $L^2(K;\nu)$ is conservative, we have
$$\int_Kp_t(x,y)\nu(\md y)=1\text{ for any }t\in(0,+\infty),\text{ for any }x\in K.$$
Then for any $u\in L^2(K;\nu)$, we have
$$\calE(u,u)=\lim_{t\downarrow0}\frac{1}{t}(u-T_tu,u)=\lim_{t\downarrow0}\frac{1}{2t}\int_K\int_{K}(u(x)-u(y))^2p_t(x,y)\nu(\md y)\nu(\md x),$$
where $t\mapsto\frac{1}{2t}\int_K\int_{K}\ldots\nu(\md y)\nu(\md x)$ is monotone decreasing in $(0,+\infty)$. Hence for any $r\in(0,\mathrm{diam}(K))$, letting $t=r^{\beta_0}$, we have
\begin{align*}
\calE(u,u)&\ge\frac{1}{2t}\int_K\int_{B(x,r)}(u(x)-u(y))^2p_t(x,y)\nu(\md y)\nu(\md x)\\
&\ge\frac{1}{2t}\int_K\int_{B(x,r)}(u(x)-u(y))^2\frac{C_1}{t^{\alpha/\beta_0}}\Phi\left(C_2\frac{d(x,y)}{t^{1/\beta_0}}\right)\nu(\md y)\nu(\md x)\\
&\ge\frac{1}{2t}\int_K\int_{B(x,r)}(u(x)-u(y))^2\frac{C_1}{t^{\alpha/\beta_0}}\Phi\left(C_2\frac{r}{t^{1/\beta_0}}\right)\nu(\md y)\nu(\md x)\\
&=\frac{C_1\Phi(C_2)}{2}\frac{1}{r^{\alpha+\beta_0}}\int_K\int_{B(x,r)}(u(x)-u(y))^2\nu(\md y)\nu(\md x).
\end{align*}

Suppose that $A$ is bounded, then $\sqrt{-A}$ is also bounded, hence $\calF=\calD(\sqrt{-A})=L^2(K;\nu)$ and
$$\calE(u,u)=(\sqrt{-A}u,\sqrt{-A}u)\le\mynorm{\sqrt{-A}}^2\mynorm{u}_{L^2(K;\nu)}^2=\mynorm{A}\mynorm{u}_{L^2(K;\nu)}^2\text{ for any }u\in L^2(K;\nu).$$
Hence
\begin{align}
&\frac{C_1\Phi(C_2)}{2}\frac{1}{r^{\alpha+\beta_0}}\int_K\int_{B(x,r)}(u(x)-u(y))^2\nu(\md y)\nu(\md x)\nonumber\\
&\le\mynorm{A}\mynorm{u}_{L^2(K;\nu)}^2\text{ for any }u\in L^2(K;\nu),\text{ for any }r\in (0,\mathrm{diam}(K)).\label{eqn_bounded}
\end{align}

Since $K$ is $\alpha$-regular, there exists some positive constant $C$ such that
$$\frac{1}{C}r^{\alpha}\le\nu(B(x,r))\le Cr^{\alpha}\text{ for any }x\in K,\text{ for any }r\in(0,\mathrm{diam}(K)).$$

Let $c:=2C^{2/\alpha}\ge2$. We claim that
$$B(x,cr)\backslash B(x,r)\ne\emptyset\text{ for any }x\in K,\text{ for any }r\in(0,\frac{1}{c}\mathrm{diam}(K)).$$
Indeed, we have
$$\nu(B(x,cr))\ge\frac{1}{C}(cr)^\alpha=2^\alpha Cr^{\alpha}>Cr^{\alpha}\ge\nu(B(x,r)),$$
hence $B(x,cr)\backslash B(x,r)\ne\emptyset$.

For any $r\in(0,\mathrm{diam}(K)/(c^3+2))$, take arbitrary $x_0\in K$, let $u=1_{B(x_0,r)}$, then
$$\text{RHS of Equation (\ref{eqn_bounded})}=\mynorm{A}\nu(B(x_0,r))\lesssim r^{\alpha}.$$
On the other hand
\begin{align*}
\text{LHS of Equation (\ref{eqn_bounded})}&\gtrsim\frac{1}{((c^3+2)r)^{\alpha+\beta_0}}\int_K\int_{B(x,(c^3+2)r)}(u(x)-u(y))^2\nu(\md y)\nu(\md x)\\
&\asymp\frac{1}{r^{\alpha+\beta_0}}\int_{B(x_0,r)}\int_{B(x,(c^3+2)r)\backslash B(x_0,r)}(1-0)^2\nu(\md y)\nu(\md x)\\
&=\frac{1}{r^{\alpha+\beta_0}}\int_{B(x_0,r)}\nu({B(x,(c^3+2)r)\backslash B(x_0,r)})\nu(\md x)\\
&\ge\frac{1}{r^{\alpha+\beta_0}}\int_{B(x_0,r)}\nu({B(x_0,c^3r)\backslash B(x_0,r)})\nu(\md x)\\
&=\frac{1}{r^{\alpha+\beta_0}}\nu({B(x_0,c^3r)\backslash B(x_0,r)})\nu(B(x_0,r)).
\end{align*}
Since $B(x_0,c^2r)\backslash B(x_0,cr)\ne\emptyset$, taking arbitrary $z\in B(x_0,c^2r)\backslash B(x_0,cr)$, we have
$$B(z,r)\subseteq B(x_0,c^3r)\backslash B(x_0,r),$$
then
$$\nu(B(x_0,c^3r)\backslash B(x_0,r))\ge\nu(B(z,r)).$$
Hence
$$\text{LHS of Equation (\ref{eqn_bounded})}\gtrsim\frac{1}{r^{\alpha+\beta_0}}\nu(B(z,r))\nu(B(x_0,r))\gtrsim\frac{1}{r^{\alpha+\beta_0}}\cdot r^{\alpha}\cdot r^{\alpha}=r^{\alpha-\beta_0}.$$

Therefore, we have
$$r^{\alpha-\beta_0}\lesssim r^{\alpha}\text{ for any }r\in(0,\frac{\mathrm{diam}(K)}{c^3+2}).$$
Letting $r\downarrow0$, we obtain a contradiction! Hence $A$ is unbounded.
\end{proof}

Second, we show that $\calF^{(\delta)}=\calF_{\delta\beta_0}$ for any $\delta\in(0,1)$. Indeed, the calculation of jumping kernels from heat kernels by subordination is standard. We give the calculation here for completeness.

\begin{proof}[Proof of $\calF^{(\delta)}=\calF_{\delta\beta_0}$ for any $\delta\in(0,1)$]
By Corollary \ref{cor_BD}, we have
\begin{align*}
&\calE^{(\delta)}(u,u)=\frac{1}{2}\int_K\int_K(u(x)-u(y))^2J^{(\delta)}(x,y)\nu(\md x)\nu(\md y),\\
&\calF^{(\delta)}=\myset{u\in L^2(K;\nu):\int_K\int_K(u(x)-u(y))^2J^{(\delta)}(x,y)\nu(\md x)\nu(\md y)<+\infty},
\end{align*}
where
$$J^{(\delta)}(x,y)=\frac{\delta}{\Gamma(1-\delta)}\int_0^{+\infty}\frac{1}{t^{1+\delta}}p_t(x,y)\md t.$$
Hence, we only need to show that
$$J^{(\delta)}(x,y)\asymp\frac{1}{d(x,y)^{\alpha+\delta\beta_0}}.$$

By \cite[Theorem 4.1]{GK08}, we have the following dichotomy.
\begin{enumerate}[(a)]
\item Either $(\calE,\calF)$ on $L^2(K;\nu)$ is local, $\beta_0\in[2,\alpha+1]$ and
$$\Phi(s)\asymp C\exp\left(-cs^{\frac{\beta_0}{\beta_0-1}}\right).$$
\item Or $(\calE,\calF)$ on $L^2(K;\nu)$ is non-local, $\beta_0\in(0,\alpha+1]$ and
$$\Phi(s)\asymp (1+s)^{-(\alpha+\beta_0)}.$$
\end{enumerate}

For (a). We have
$$\frac{C_1}{t^{\alpha/\beta_0}}\exp\left(-C_2\left(\frac{d(x,y)}{t^{1/\beta_0}}\right)^{\frac{\beta_0}{\beta_0-1}}\right)\le p_t(x,y)\le\frac{C_3}{t^{\alpha/\beta_0}}\exp\left(-C_4\left(\frac{d(x,y)}{t^{1/\beta_0}}\right)^{\frac{\beta_0}{\beta_0-1}}\right)$$
for any $x,y\in K$, for any $t\in(0,\mathrm{diam}(K)^{\beta_0})$.

Note the following elementary results. For any $a\in(1,+\infty)$, $b,c,d\in(0,+\infty)$, we have
\begin{align}
\int_0^{+\infty}\frac{1}{t^a}\exp\left(-\frac{c}{t^b}\right)\md t&=\frac{\Gamma\left(\frac{a-1}{b}\right)}{bc^{\frac{a-1}{b}}},\label{eqn_ele1}\\
\int_0^{d}\frac{1}{t^a}\exp\left(-\frac{c}{t^b}\right)\md t&=\frac{1}{bc^{\frac{a-1}{b}}}\int_{\frac{c}{d^b}}^{+\infty}s^{\frac{a-1}{b}-1}e^{-s}\md s.\label{eqn_ele2}
\end{align}

If $\mathrm{diam}(K)=+\infty$. By Equation (\ref{eqn_ele1}), we have
\begin{align*}
&\delta(1-\delta)\left[(\beta_0-1)\frac{\Gamma\left(\frac{(\beta_0-1)(\alpha+\delta\beta_0)}{\beta_0}\right)}{\Gamma(2-\delta)}\frac{C_1}{C_2^{\frac{(\beta_0-1)(\alpha+\delta\beta_0)}{\beta_0}}}\right]\frac{1}{d(x,y)^{\alpha+\delta\beta_0}}\le J^{(\delta)}(x,y)\\
&\le\delta(1-\delta)\left[(\beta_0-1)\frac{\Gamma\left(\frac{(\beta_0-1)(\alpha+\delta\beta_0)}{\beta_0}\right)}{\Gamma(2-\delta)}\frac{C_3}{C_4^{\frac{(\beta_0-1)(\alpha+\delta\beta_0)}{\beta_0}}}\right]\frac{1}{d(x,y)^{\alpha+\delta\beta_0}}\text{ for any }x,y\in K.
\end{align*}

If $\mathrm{diam}(K)<+\infty$. Using semi-group property, we have
$$p_t(x,y)\le\frac{C_3}{\mathrm{diam}(K)^{\alpha}}\text{ for any }t\in[\mathrm{diam}(K)^{\beta_0},+\infty),\text{ for any }x,y\in K.$$
By Equation (\ref{eqn_ele1}), we have
\begin{align*}
&J^{(\delta)}(x,y)=\frac{\delta}{\Gamma(1-\delta)}\left(\int_0^{\mathrm{diam}(K)^{\beta_0}}+\int_{\mathrm{diam}(K)^{\beta_0}}^{+\infty}\right)\frac{1}{t^{1+\delta}}p_t(x,y)\md t\\
&\le\frac{\delta}{\Gamma(1-\delta)}\left(\int_0^{\mathrm{diam}(K)^{\beta_0}}\frac{1}{t^{1+\delta}}\frac{C_3}{t^{\alpha/\beta_0}}\exp\left(-C_4\left(\frac{d(x,y)}{t^{1/\beta_0}}\right)^{\frac{\beta_0}{\beta_0-1}}\right)\md t\right.\\
&\left.+\int_{\mathrm{diam}(K)^{\beta_0}}^{+\infty}\frac{1}{t^{1+\delta}}\frac{C_3}{\mathrm{diam}(K)^\alpha}\md t\right)\\
&\le\frac{\delta}{\Gamma(1-\delta)}\left(\int_0^{+\infty}\frac{1}{t^{1+\delta}}\frac{C_3}{t^{\alpha/\beta_0}}\exp\left(-C_4\left(\frac{d(x,y)}{t^{1/\beta_0}}\right)^{\frac{\beta_0}{\beta_0-1}}\right)\md t+\frac{1}{\delta}C_3\frac{1}{\mathrm{diam}(K)^{\alpha+\delta\beta_0}}\right)\\
&\le(1-\delta)\left[\frac{C_3}{\Gamma(2-\delta)}\left({\delta(\beta_0-1)}\frac{\Gamma\left(\frac{(\beta_0-1)(\alpha+\delta\beta_0)}{\beta_0}\right)}{C_4^{\frac{(\beta_0-1)(\alpha+\beta_0)}{\beta_0}}}+1\right)\right]\frac{1}{d(x,y)^{\alpha+\delta\beta_0}}.
\end{align*}
On the other hand, by Equation (\ref{eqn_ele2}), we have
\begin{align*}
&J^{(\delta)}(x,y)\ge\frac{\delta}{\Gamma(1-\delta)}\int_0^{\mathrm{diam}(K)^{\beta_0}}\frac{1}{t^{1+\delta}}p_t(x,y)\md t\\
&\ge\frac{\delta}{\Gamma(1-\delta)}\int_0^{\mathrm{diam}(K)^{\beta_0}}\frac{1}{t^{1+\delta}}\frac{C_1}{t^{\alpha/\beta_0}}\exp\left(-C_2\left(\frac{d(x,y)}{t^{1/\beta_0}}\right)^{\frac{\beta_0}{\beta_0-1}}\right)\md t\\
&=\frac{\delta}{\Gamma(1-\delta)}C_1\frac{\beta_0-1}{C_2^{\frac{(\beta_0-1)(\alpha+\delta\beta_0)}{\beta_0}}d(x,y)^{\alpha+\delta\beta_0}}\int_{C_2\left(\frac{d(x,y)}{\mathrm{diam}(K)}\right)^{\frac{\beta_0}{\beta_0-1}}}^{+\infty}t^{\frac{(\beta_0-1)(\alpha+\delta\beta_0)}{\beta_0}-1}e^{-t}\md t\\
&\ge\frac{\delta}{\Gamma(1-\delta)}C_1\frac{\beta_0-1}{C_2^{\frac{(\beta_0-1)(\alpha+\delta\beta_0)}{\beta_0}}d(x,y)^{\alpha+\delta\beta_0}}\int_{C_2}^{+\infty}t^{\frac{(\beta_0-1)(\alpha+\delta\beta_0)}{\beta_0}-1}e^{-t}\md t\\
&=\delta(1-\delta)\left[\frac{C_1}{\Gamma(2-\delta)}\frac{\beta_0-1}{C_2^{\frac{(\beta_0-1)(\alpha+\delta\beta_0)}{\beta_0}}}\int_{C_2}^{+\infty}t^{\frac{(\beta_0-1)(\alpha+\delta\beta_0)}{\beta_0}-1}e^{-t}\md t\right]\frac{1}{d(x,y)^{\alpha+\delta\beta_0}}.
\end{align*}

For (b). We have
$$\frac{C_1}{t^{\alpha/\beta_0}}\left(1+\frac{d(x,y)}{t^{1/\beta_0}}\right)^{-(\alpha+\beta_0)}\le p_t(x,y)\le\frac{C_2}{t^{\alpha/\beta_0}}\left(1+\frac{d(x,y)}{t^{1/\beta_0}}\right)^{-(\alpha+\beta_0)}$$
for any $x,y\in K$, for any $t\in(0,\mathrm{diam}(K)^{\beta_0})$.

Since
$$\frac{1}{2^{\alpha+\beta_0}}\left(\frac{1}{t^{\alpha/\beta_0}}\wedge\frac{t}{d(x,y)^{\alpha+\beta_0}}\right)\le\frac{1}{t^{\alpha/\beta_0}}\left(1+\frac{d(x,y)}{t^{1/\beta_0}}\right)^{-(\alpha+\beta_0)}\le \left(\frac{1}{t^{\alpha/\beta_0}}\wedge\frac{t}{d(x,y)^{\alpha+\beta_0}}\right)$$
for any $x,y\in K$, for any $t\in(0,+\infty)$, we may assume that
$$C_1\left(\frac{1}{t^{\alpha/\beta_0}}\wedge\frac{t}{d(x,y)^{\alpha+\beta_0}}\right)\le p_t(x,y)\le C_2\left(\frac{1}{t^{\alpha/\beta_0}}\wedge\frac{t}{d(x,y)^{\alpha+\beta_0}}\right)$$
for any $x,y\in K$, for any $t\in(0,\mathrm{diam}(K)^{\beta_0})$.

If $\mathrm{diam}(K)=+\infty$. Since
$$\int_0^{+\infty}\frac{1}{t^{1+\delta}}\left(\frac{1}{t^{\alpha/\beta_0}}\wedge\frac{t}{d(x,y)^{\alpha+\beta_0}}\right)\md t=\left(\frac{1}{1-\delta}+\frac{\beta_0}{\alpha+\delta\beta_0}\right)\frac{1}{d(x,y)^{\alpha+\delta\beta_0}},$$
we have
\begin{align*}
&\delta\left[\frac{C_1}{\Gamma(2-\delta)}\left(1+\frac{(1-\delta)\beta_0}{\alpha+\delta\beta_0}\right)\right]\frac{1}{d(x,y)^{\alpha+\delta\beta_0}}\le J^{(\delta)}(x,y)\\
&\le\delta\left[\frac{C_2}{\Gamma(2-\delta)}\left(1+\frac{(1-\delta)\beta_0}{\alpha+\delta\beta_0}\right)\right]\frac{1}{d(x,y)^{\alpha+\delta\beta_0}}.
\end{align*}

If $\mathrm{diam}(K)<+\infty$. Using semi-group property, we have
$$p_t(x,y)\le \frac{C_2}{\mathrm{diam}(K)^\alpha}\text{ for any }t\in[\mathrm{diam}(K)^{\beta_0},+\infty),\text{ for any }x,y\in K.$$
Hence
\begin{align*}
&J^{(\delta)}(x,y)=\frac{\delta}{\Gamma(1-\delta)}\left(\int_0^{\mathrm{diam}(K)^{\beta_0}}+\int_{\mathrm{diam}(K)^{\beta_0}}^{+\infty}\right)\frac{1}{t^{1+\delta}}p_t(x,y)\md t\\
&\le\frac{\delta}{\Gamma(1-\delta)}\left(\int_0^{\mathrm{diam}(K)^{\beta_0}}\frac{1}{t^{1+\delta}}C_2\left(\frac{1}{t^{\alpha/\beta_0}}\wedge\frac{t}{d(x,y)^{\alpha+\beta_0}}\right)\md t\right.\\
&\left.+\int_{\mathrm{diam}(K)^{\beta_0}}^{+\infty}\frac{1}{t^{1+\delta}}\frac{C_2}{\mathrm{diam}(K)^{\alpha}}\md t\right)\\
&\le\frac{\delta}{\Gamma(1-\delta)}\left(\int_0^{+\infty}\frac{1}{t^{1+\delta}}C_2\left(\frac{1}{t^{\alpha/\beta_0}}\wedge\frac{t}{d(x,y)^{\alpha+\beta_0}}\right)\md t+\int_{\mathrm{diam}(K)^{\beta_0}}^{+\infty}\frac{1}{t^{1+\delta}}\frac{C_2}{\mathrm{diam}(K)^{\alpha}}\md t\right)\\
&=\frac{\delta}{\Gamma(1-\delta)}C_2\left(\left(\frac{1}{1-\delta}+\frac{\beta_0}{\alpha+\delta\beta_0}\right)\frac{1}{d(x,y)^{\alpha+\delta\beta_0}}+\frac{1}{\delta}\frac{1}{\mathrm{diam}(K)^{\alpha+\delta\beta_0}}\right)\\
&\le\delta\left[\frac{C_2}{\Gamma(2-\delta)}\left(1+\frac{(1-\delta)\beta_0}{\alpha+\delta\beta_0}+\frac{1-\delta}{\delta}\right)\right]\frac{1}{d(x,y)^{\alpha+\delta\beta_0}}.
\end{align*}
On the other hand
\begin{align*}
&J^{(\delta)}(x,y)\ge\frac{\delta}{\Gamma(1-\delta)}\int_0^{\mathrm{diam}(K)^{\beta_0}}\frac{1}{t^{1+\delta}}p_t(x,y)\md t\\
&\ge\frac{\delta}{\Gamma(1-\delta)}\int_0^{\mathrm{diam}(K)^{\beta_0}}\frac{1}{t^{1+\delta}}C_1\left(\frac{1}{t^{\alpha/\beta_0}}\wedge\frac{t}{d(x,y)^{\alpha+\beta_0}}\right)\md t\\
&\ge\frac{\delta}{\Gamma(1-\delta)}\int_0^{d(x,y)^{\beta_0}}\frac{1}{t^{1+\delta}}C_1\left(\frac{1}{t^{\alpha/\beta_0}}\wedge\frac{t}{d(x,y)^{\alpha+\beta_0}}\right)\md t\\
&=\frac{\delta}{\Gamma(1-\delta)}\int_0^{d(x,y)^{\beta_0}}\frac{1}{t^{1+\delta}}C_1\frac{t}{d(x,y)^{\alpha+\beta_0}}\md t\\
&=\delta\frac{C_1}{\Gamma(2-\delta)}\frac{1}{d(x,y)^{\alpha+\delta\beta_0}}.
\end{align*}
\end{proof}

\bibliographystyle{plain}

\def\cprime{$'$}

\end{document}